\definecolor{OliveGreen}{rgb}{0, 0.5, 0}
\newcommand{\ifinclude}[1]{#1}
\renewcommand{\ifinclude}[1]{ }
\newtheorem{theorem}{Theorem}[section]
\newtheorem{definition}[theorem]{Definition}
\newtheorem{lemma}[theorem]{Lemma}
\newtheorem{remark}[theorem]{Remark}
\newtheorem{example}[theorem]{Example}
\newtheorem{corollary}[theorem]{Corollary}
\newtheorem{proposition}[theorem]{Proposition}
\newcommand{\CC}{\mathcal{C}}
\newcommand{\EE}{\mathcal{E}}
\newcommand{\FF}{\mathcal{F}}
\newcommand{\GG}{\mathcal{G}}
\newcommand{\HH}{\mathcal{H}}
\newcommand{\II}{\mathcal{I}}
\newcommand{\KK}{\mathcal{K}}
\newcommand{\PP}{\mathcal{P}}
\newcommand{\QQ}{\mathcal{Q}}
\newcommand{\RR}{\mathcal{R}}
\newcommand{\VV}{\mathcal{V}}
\newcommand{\WW}{\mathcal{W}}
\newcommand{\real}{{\mathbb{R}}}
\newcommand{\realnonnegative}{{\mathbb{R}}_{\ge 0}}
\newcommand{\naturalnumbers}{\mathbb{N}}
\newcommand{\setdef}[2]{\{#1 \; | \; #2\}}
\newcommand{\setdefb}[2]{\bigl\{#1 \; | \; #2\bigr\}}
\newcommand{\setdefB}[2]{\Bigl\{#1 \; | \; #2\Bigr\}}
\newcommand{\map}[3]{#1:#2 \rightarrow #3}
\newcommand{\abs}[1]{\ensuremath{\left\lvert{#1}\right\rvert}}
\newcommand{\norm}[1]{\ensuremath{\| #1 \|}}
\newcommand{\info}{\mathrm{inf}}
\newcommand{\lowra}{\mathtt{low}}
\newcommand{\upra}{\mathtt{up}}
\newcommand{\tr}{\mathrm{true}}
\newcommand{\stage}{\mathrm{stage}}
\newcommand{\vin}{v^{\mathsf{in}}}
\newcommand{\vout}{v^{\mathsf{out}}}
\newcommand{\VI}{\operatorname{VI}}
\newcommand{\oprocendsymbol}{\hbox{$\bullet$}}
\newcommand{\oprocend}{\relax\ifmmode\else\unskip\hfill\fi\oprocendsymbol}
\newcommand{\colsto}[2]{\mathsf{CS}(#1,#2)}
\newcommand{\longthmtitle}[1]{\mbox{}\textup{\textsl{(#1):}}}
\title{Inferring the prior in routing games using \\ public signalling}
\author{Jasper Verbree, \IEEEmembership{Student Member, IEEE}, Ashish Cherukuri \IEEEmembership{Member, IEEE}
	\thanks{J. Verbree and A. Cherukuri 
				are with the ENgineering and TEchnology institute Groningen (ENTEG), University of Groningen, 
				\texttt{\{j.verbree, a.k.cherukuri\}@rug.nl}.}
}
\begin{document}	
	\maketitle
	\thispagestyle{empty}
\begin{abstract}
This paper considers Bayesian persuasion for routing games where information about the uncertain state of the network is provided by
a traffic information system (TIS) using public signals. In this setup, the TIS commits to a signalling scheme and participants form a posterior belief about the state of the network based on prior beliefs and the received signal. They subsequently select routes minimizing their individual expected travel time under their posterior beliefs, giving rise to a Wardrop equilibrium. We investigate how the TIS can infer the prior beliefs held by the participants by designing suitable signalling schemes, and observing the equilibrium flows under different signals.  
We show that under mild conditions a signalling scheme that allows for exact inference of the prior exists. We then provide an iterative algorithm that finds such a scheme in a finite number of steps. We show that schemes designed by our algorithm are robust, in the sense that they can still identify the prior after a small enough perturbation. We also investigate the case where the population is divided among multiple priors, and give conditions under which the fraction associated to each prior can be identified. Several examples illustrate our results.
\end{abstract}
\begin{IEEEkeywords}
  Bayesian persuasion; Inferring prior; Network routing game; Public signalling.
\end{IEEEkeywords}
\section{Introduction}
\IEEEPARstart{R}{ecent} years have seen increased utilization of \textit{traffic information systems} (TISs) such as Google maps and Waze by users of traffic networks. While TISs can cause congestion and pose various challenges for traffic management~\cite{JM:19}, they also create the opportunity of \textit{information design}, where information about the state of the network is strategically revealed in order to minimize congestion. For instance, in some cases travel time of all participants can be reduced when information about certain routes is witheld~\cite{DA-AM-AM-AO:18}.
A fitting framework for studying the effects of information on decision making is \textit{Bayesian persuasion}~\cite{EK-MG:11}. Applying this framework to the routing context, the network is assumed to be in one of several possible states, each representing for instance the presence or absence of road congestion, accidents, or weather events.
The participants are assumed to have a prior belief  about the probability of each state occurring. The TIS releases information about the state using a set of messages or signals and in this way influences the posterior belief formed by the participants. Subsequently, participants select routes that minimize the expected travel cost in a selfish manner under the posterior belief, i.e., they route according to a \textit{Wardrop equilibrium}. In the above explained framework, the TIS can influence the flow by carefully designing the map from states to messages, also known as the signalling scheme. The effects of such a design naturally depend on the prior of the participants. However, the TIS may not know this prior in advance, presenting a problem for the implementation of this method. For instance, when aiming to minimize the total travel time of all participants, any error in the estimation of the prior by the TIS can result in decreased performance.

The aim of this paper is to address this problem by studying how the prior of a population influences the Wardrop equilibrium, and how information about the prior can be inferred from observing the equilibrium flows under a signalling scheme.
\subsubsection*{Literature review}
The Bayesian persuasion framework~\cite{EK-MG:11} for information design is adapted to the network routing setup in several recent works. In~\cite{SD-EK-RM:17} the potential of information design to reduce travel times is show-cased for two common examples, in~\cite{SV-MF-AH:15} the cost-performance of incentive-compatible signalling schemes are studied in comparison to socially optimal solutions, and in~\cite{OM-CL:18,HT-DT:19} the relative performance of different strategies of information design, such as public and private signalling, are obtained. Instead of assuming that all users participate in persuasion, the works~\cite{YZ-KS:18,YZ-KS:20} determine optimal information provision for heterogeneous populations, where a part of the users do not ``trust'' the TIS. Closer to the subject of this paper \cite{JL:19} also studies the effects of a mismatch between the actual distribution and the prior belief of a population concerning some parameters of a congestion game.  In particular it introduces a type of routing game called a `subjective Bayesian congestion game' which considers information that users have about the signals other users received. Recent works also investigate the possible pitfalls of information provision by TISs. For example,~\cite{MW-SA-AEO:21,HT-AS-KP-PV:19} explore inefficiencies caused by competing TISs;~\cite{DA-AM-AM-AO:18} highlights how knowing more routes can cause more congestion, revealing informational Braess's paradox; and \cite{GB-FP:20} demonstrates oscillating traffic behaviour when information about travel-times is available in real time. An analysis of how the benefits and decriments of revealing information to the population relate to the specifics of the cost functions and structure of the uncertainty is given in \cite{RL-TD-GE-AR:14}.

The viewpoint adapted in our paper of learning about private parameters, such as the prior, of users in a routing setup is similar in spirit to~\cite{KL-WK-AB:16} and~\cite{JT-AB:17}. In the former, the problem of estimating the learning rate of the population that employs a mirror descent algorithm to adapt route choices is considered. In the latter, learning of the cost functions of paths is studied. In a broader context, \cite{LJR-TF:20} investigates incentive design for a set of noncooperative agents by learning the cost functions that govern their decisions. Our work is partly related to learning in routing games, where a lot of focus is on learning from the perspective of participants, see~\cite{SK-WK-RD-AB:15,EM-FP-AO:17} and references therein. The work~\cite{MW-SA:19} looks at a Bayesian framework and explores how participants learn about the state of the network in repeated play. It is worth noting that none of the works consider learning preferences or biases inherently present in decisions of users in the context of information design. 

Finally, we note that a popular alternative to information design for influencing flows in a traffic network is \textit{incentive design}. For routing games, this area focuses on how tolls and subsidies can be used to influence the behavior of traffic participants, see \cite{PNB-JRM:17} and references therein for an overview, and \cite{BLF-PNB-JRM:22} for an investigation on the potential of using incentive and information design in tandem.
\subsubsection*{Setup and contributions}
We study non-atomic routing games over a network with a single origin and destination. The network can exist in one of a finite number of possible states and each path is associated with a cost function that differs per state of the network. Traffic participants rely on a traffic information system (TIS) to supply information about the current state of the system. The TIS commits in advance to a signalling scheme which is known to the participants and which determines the probability with which the TIS sends a particular signal to all participants when a state is realized. This framework is commonly known as \emph{public signalling} as everyone gets the same signal. After receiving a signal, users form a Bayesian posterior belief about the state of the network based on the prior belief and the signalling scheme employed by the TIS. Subsequently, the flow induced by user decisions is a Wardrop equilibrium with respect to the expected costs under the posterior. The aim of this paper is to investigate how and when the TIS can infer the prior exactly by observing the equilibrium flows under different signals. In Section \ref{sec:modelandproblem} we motivate the advantage of knowing the prior with an example showing that an error in the estimate of the prior can lead to an increase in social cost. Main contributions of this paper are:
\begin{enumerate}[label=(\roman*)]
	\item Using a constructive proof, we show that under mild conditions there always exists a signalling scheme employing as many signals as there are states that will allow the TIS to exactly determine the prior.
	\item We give an iterative procedure, terminating in a finite number of steps, that finds a signalling scheme allowing the TIS to determine the prior. The procedure uses the observations of equilibrium flows in each iteration.
	\item We show that a subclass of signalling schemes that allow for identification of the prior are robust; i.e., schemes of this class can identify the prior, even if the prior is subject to small perturbations between instances of the game.
\end{enumerate}
We provide examples throughout the paper to better illustrate the technical exposition.
\subsubsection*{Organization}
The routing model and the motivating example are presented in Section~\ref{sec:modelandproblem}. The existence and design of a signalling scheme that can infer the prior exactly are studied in Section~\ref{sec:general}. Some additional results considering relaxed assumptions on the prior are collected in Section~\ref{sec:relaxed-prior}. Finally, the conclusions are summarized in Section~\ref{sec:conclusions}. 

\noindent
\textbf{Notation:} We use the notation $[n] := \{1,2,\cdots,n\}$.  For a vector $x \in \real^n$, the $i$-th component is denoted as $x_i$. We use $\mathbb{1}$ to denote the vector of ones, where the dimension is clear from context. Given a $\lambda > 0$, we define $\Delta_\lambda^n := \setdef{x \in \realnonnegative^n}{\sum_{i \in [n]} x_i = \lambda}$. For a matrix $A \in \real^{n \times m}$ the $(i,j)$-th element is denoted as $A_{ij}$. The space of $n \times m$ \textit{column stochastic matrices} is written as ${\colsto{n}{m} = \setdef{A \in \realnonnegative^{n \times m}}{\sum_{i \in [n]}A_{ij} = 1 \text{ for all } j\in [m]}}$.
\section{Model and problem statement} \label{sec:modelandproblem}
Consider a network defined by a directed graph $\GG = (\VV,\EE)$, where $\VV = [N]$, $N \in \mathbb{N}$, is the set of vertices and ${\EE \subseteq \VV \times \VV}$ is the set of edges. Each edge $e_k \in \EE$ consists of an ordered pair of vertices $(\vin_k,\vout_k)$, termed in- and out-vertex respectively, where edge $e_k$ points from $\vin_k$ to $\vout_k$. For $v,w \in \VV$, a path $p$ from $v$ to $w$ is then an ordered set of edges $(e_1,\cdots,e_\ell)$ such that $\vin_{1} = v$, $\vout_{\ell} = w$ and $\vout_{i} = \vin_{i+1}$ for all ${i \in [\ell-1]}$. In addition, paths are defined to be acyclic, meaning that no vertex is visited twice when traveling along a path. To this network we associate an origin $v_o \in \VV$ and a destination ${v_d \in \VV}$. The set of paths in the graph starting at $v_o$ and ending at $v_d$ are collected in the set $\PP$. For notational convenience and without loss of generality, we assume that a unit amount of traffic needs to be routed from the source to the destination. The amount of traffic that uses the path $p \in \PP$ is denoted as $f_p \in \realnonnegative$ and is referred to as the \textit{flow on path} $p$. Taken together, the flows on all different paths give rise to a vector $f \in \realnonnegative^{n}$ which is called a \textit{path-flow}, or simply a \textit{flow}. The set of feasible flows is then given by
\begin{equation*}
	\HH := \setdefB{f \in \realnonnegative^n}{\sum_{p \in \PP} f_p = 1}.
\end{equation*}
Based on this path-flow, the flow over an edge $e_k \in \EE$, denoted $f_{e_k}$, is simply the sum of the flows of all paths containing $e_k$:
\begin{equation} \label{eq:edge-flows-defined}
	f_{e_k} : = \sum_{p \ni e_k} f_p.
\end{equation}
We occasionally denote the vector constituting all edge-flows with $f_{\mathrm{edge}}$. At any instant, the network can be in one of a finite number of states. This can model, for instance, the presence or absence of an accident on a road, or varying weather conditions. The set of states is denoted by $\Theta := \{\theta_1, \cdots, \theta_m\}$. In any state $\theta_s \in \Theta$, each edge $e_k \in \EE$ is associated with a cost function ${\map{C_{e_k}^{\theta_s}}{\realnonnegative}{\realnonnegative}}$, $f_{e_k} \mapsto C_{e_k}^{\theta_s}(f_{e_k})$, which we assume to be known, and continuous and strictly increasing. This function models, for example, the time it takes to traverse edge $e_k$ in state $\theta_{s}$. Given edge-costs, the cost of traversing path $p$ in state $\theta_{s}$ is simply the sum of costs of all edges contained in $p$:
\begin{equation} \label{eq:path-cost-from-edges}
	C^{\theta_{s}}_p(f) = \sum_{e_k \in p} C^{\theta_{s}}_{e_k}(f_{e_k}).
\end{equation}

We consider a Bayesian setting, where the users of the network are assumed to have a \textit{prior} belief $q \in \Delta_1^m$ regarding the probability distribution of the state in which the network operates at any instant. That is, $q_s$ is the probability with which the users believe the network will be in state $\theta_s$, given that they have received no additional information. For $\varphi \in \real^m$, the \textit{weighted-cost} under $\varphi$ of traversing a path $p$ and an edge $e_k$ are respectively given by
\begin{equation} \label{eq:weighted-path-cost}
	\begin{aligned}
		C^\varphi_p(f) \! := \! \! \sum_{s \in [m]} \varphi_s C_p^{\theta_s}(f), \quad
		C^\varphi_{e_k}(f) \! := \! \! \sum_{s \in [m]} \varphi_s C_{e_k}^{\theta_s}(f_{e_k}).
	\end{aligned}
\end{equation}
When $\varphi \in \Delta_1^m$, i.e., when $\varphi$ is a probability distribution, we call these the \textit{expected cost} under $\varphi$. For notational convenience, we define the following:
\begin{align*}
\mathcal{C} &:= \{C_{e_k}^{\theta_s}\}_{e_k \in \EE, s \in [m]},	\\
C_p(f)	&:= \big(C^{\theta_1}_p(f), \cdots, C^{\theta_m}_p(f)\big)^\top,	\\
C^\varphi(f)	&:= \big(C^{\varphi}_1(f), \cdots, C^{\varphi}_n(f)\big)^\top.
\end{align*}
Here, $\CC$ denotes the set of all edge-cost functions, $C_p$ is the vector of the cost functions associated to path $p$ per state, and $C^{\varphi}$ is the vector of weighted-costs under $\varphi$ per path.

For a given a probability distribution $\varphi$ over the states $\Theta$, we assume that the users aim to minimize their own expected cost of traveling, where the expectation is taken with respect to the distribution $\varphi$. To formalize which flows result from such rational decision-making of users, we define the following notion of Wardrop equilibrium: 
\begin{definition}\longthmtitle{$\varphi$-WE} \label{def:qbased-WE}
	Given a set of paths $\PP$, states ${\Theta}$, cost functions $\CC$, and a probability distribution $\varphi \in \Delta^m_1$, a flow $f^\varphi$ is said to be a \emph{$\varphi$-based Wardrop equilibrium} ($\varphi$-WE) if $f^\varphi \in \HH$ and for all $p \in \PP$ such that $f^\varphi_p > 0$ we have
	\begin{equation} \label{eq:def-of-WE-constraint}
	C^\varphi_p(f^\varphi) \leq C_r^\varphi(f^\varphi) \quad \text{for all } r \in \PP.
	\end{equation}
	The set of all $\varphi$-WE is denoted $\WW^\varphi$.
\end{definition}
We will sometimes refer to a $\varphi$-WE $f^\varphi$ as a flow, or a WE, induced by the distribution $\varphi$. The intuition behind this notion of Wardrop equilibrium is that when the flow is in $\varphi$-WE, a single driver cannot decrease her expected cost by changing her routing decision. Note that, under the assumptions on $\CC$ and $\HH$, a flow $f^\varphi$ is a $\varphi$-WE if and only if it is the solution of the \textit{variational inequality} (VI) problem $\VI(\HH,C^\varphi)$. For a given map $F$ and set $\KK$ the associated VI problem $\VI(\KK,F)$ is to find $f^* \in \KK$ satisfying $(f - f^*)^\top F(f^*) \geq 0$ for all $f \in \KK$. A $\varphi$-WE is not necessarily unique. Despite this, the weighted edge-cost under $\varphi$ is the same for any $\varphi$-WE \cite{RC-VD-MS:21}. That is, for any two WE $f^\varphi, \widehat{f}^\varphi \in \WW^{\varphi}$ we have $C^\varphi_{e_k}(f^\varphi_{e_k}) = C^\varphi_{e_k}(\widehat{f}^\varphi_{e_k})$ for all $e_k \in \EE$. Since we assume that functions $C^\varphi_{e_k}$ are strictly increasing, this implies $f^\varphi_{e_k} = \widehat{f}^\varphi_{e_k}$ for all $e_k \in \EE$. In fact, we have that $\widehat{f}^\varphi$ is a $\varphi$-WE if and only if\cite[Chapter 3]{MB-CBM-CBW:56}
\begin{equation}\label{eq:edge-flow-constant}
	\widehat{f}^\varphi_{e_k}= f^\varphi_{e_k} \text{ for all } e_k \in \EE, f^\varphi \in \WW^\varphi.
\end{equation}
Throughout the paper, we denote the unique edge-flow on edge $e_k$ under all $\varphi$-WE with $f^\varphi_{e_k}$.

The last part of the model is a traffic information system (TIS), that observes the state $\theta_s$ of the network at any instant, and subsequently supplies information about this state to the drivers. The TIS has a set of signals $\mathcal{Z} := \{\zeta^1,\cdots,\zeta^z\}$ from which it chooses one to send to the users at any instant of the game. Before the traffic is routed, the TIS commits to a signalling scheme $\Phi: \Theta \mapsto \Delta_1^z$. Each state $\theta_s$ is mapped by $\Phi$ to a probability vector $\Phi(\theta_s) := \phi^{\theta_s} \in \Delta_1^z$.
After observing state $\theta_s$, the TIS randomly draws a signal from $\mathcal{Z}$ to send to the participants, where the probability of sending signal $\zeta^u$ is given by the $u$-th element of $\phi^{\theta_s}$. In our setting all participants receive the same signal, which is known as \textit{public signalling}. Note that the signalling scheme $\Phi$ can be represented as a ${z \times m}$ column stochastic matrix; that is, $\Phi \in \colsto{z}{m}$, with the $(u,s)$-th entry, denoted $\phi^u_s$, 
giving the probability of sending signal $\zeta^u$ after observing state $\theta_s$. We will adhere to this matrix representation of $\Phi$ throughout the paper.

After receiving a signal $\zeta^u$, the users update their belief about the state of the network by forming a \textit{posterior} $\widetilde{q}$ using Bayes' rule:
\begin{equation} \label{eq:Bayes-rule}
	\begin{split}
		\widetilde{q}_s = \mathbb{P}[\theta_s|\zeta^u] = \frac{\mathbb{P}[\zeta^u|\theta_{s}]q_s}{\sum_{\ell \in [m]} \mathbb{P}[\zeta^u|\theta_{\ell}]q_\ell}	= \frac{\phi^{u}_s q_s}{\sum_{\ell \in [m]} \phi^{u}_{\ell} q_{\ell}},	\\	
	\end{split}
\end{equation}
for all $s \in [m]$, where $\mathbb{P}[\theta_s|\zeta^u]$ is the probability of the network being in state $\theta_s$ having received the signal $\zeta^u$ and $\mathbb{P}[\zeta^u|\theta_s$] is the probability of sending signal $\zeta^u$ after observing state $\theta_{s}$.
The resulting flow is then assumed to be a $\widetilde{q}$-based Wardrop equilibrium. When no additional information regarding the state of the network is available to the users, the flow is assumed to depend on the prior $q$, and is given by a $q$-WE denoted as $f^q$. Throughout this paper we will use $q$ to denote the prior, $\widetilde{q}^{\zeta^u}$ to denote the posterior with respect to the signal $\zeta^u$, and use $\widetilde{q}$ when the signal is clear from the context. Associated sets of WE will be denoted as $\WW^q$, $\WW^{\zeta^u}$, and $\WW^{\widetilde{q}}$, respectively. Similarly, given a distribution $\varphi \in \Delta_1^m$ we will use the notation $\WW^\varphi$ for the set of $\varphi$-based WE, and $\WW^{\theta_s}$ for a $\varphi^{\theta_s}$-based WE, where the distribution $\varphi^{\theta_s}$ is defined by $\varphi^{\theta_s}_s = 1$. Note that when $q_s = 0$ for some $s \in [m]$, despite the TIS observing state $\theta_s$, it is possible that $\widetilde{q}_{\ell}$ is ill-defined for some $\ell \in [m]$ as it may involve division by zero. To avoid this issue, we assume that $q_s > 0$ for all $s \in [m]$.
\subsection{Main idea and motivating example} 
The setup we have introduced here is an adaptation of the Bayesian persuasion framework, as introduced in \cite{EK-MG:11}, to routing games. This model has received growing attention in recent years, often focused on the analysis and derivation of signalling schemes that maximize social welfare \cite{SD-EK-RM:17,YZ-KS:18,OM-CL:19}. A common assumption in this context is that the prior belief of the population is known to the TIS, see e.g., \cite{YZ-KS:18,MW-SA-AEO:21}. However, due to lack of information accurate estimation of the true distribution may not be possible for the participants. In addition a population as a whole may suffer from biases, such as a tendency to favor highways over local roads. Thus the prior of a population may differ from the true distribution and remain unknown to the TIS. Note that the TIS can observe the state directly in each instance, and can therefore form a relatively accurate estimate of the true distribution as a result. In this paper we aim to show how a TIS can gain information about the prior by observing equilibrium flows. We now briefly discuss a motivating example showing how for a TIS that aims to design a signalling scheme to minimize social cost, a mismatch between the prior and the estimate of that prior made by that TIS can lead to an increase in social cost. 
\begin{example}\longthmtitle{Motivating example}\label{ex:prior}
	{\rm 
		Consider a network with two nodes, the origin $v_o$ and destination $v_d$, and two parallel paths going from $v_o$ to $v_d$ as depicted in Figure \ref{fig:network_unknown_prior}.
		\begin{figure}[t]
			\centering
			\subfloat[The example network.]{\includegraphics[width = 0.1\textwidth]{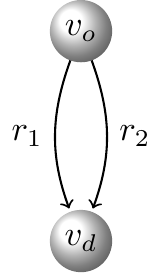}\label{fig:network_unknown_prior}} \hspace{3 pt}
			\subfloat[Cost for TIS with and without exact knowledge of~$q$.]{\includegraphics[width = 0.37\textwidth]{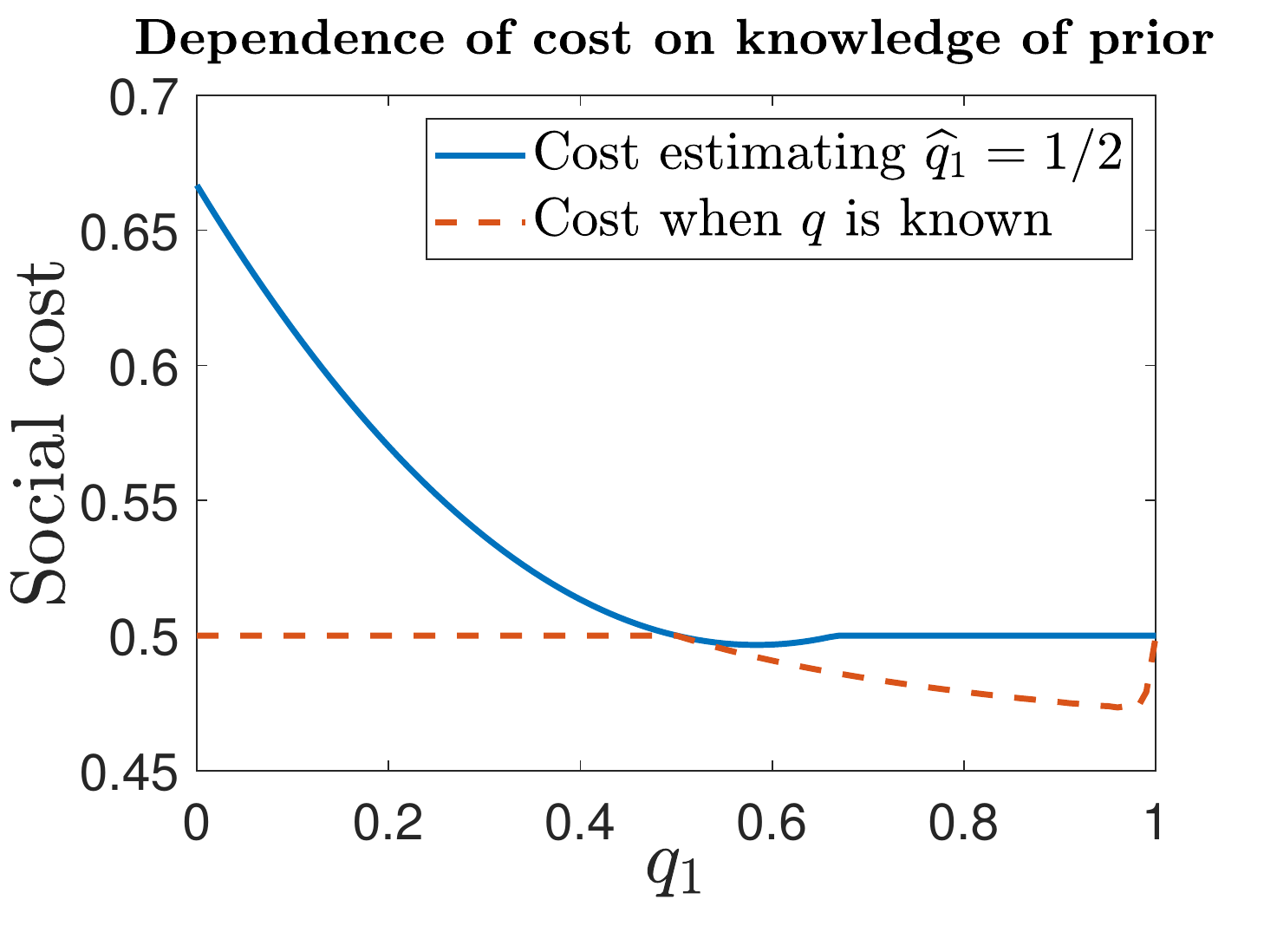}\label{fig:cost_unknown_prior}}
			\caption{An illustration (see Example~\ref{ex:prior}) of how wrongly estimating the prior $q$ can affect the ability of the TIS to minimize social cost.}
		\end{figure}
		The network can be in two states, and the cost functions of the paths in these states are
		\begin{align*}
			& C_1^{\theta_1}(f) = C_1^{\theta_2}(f) = 2f_1 + \frac{1}{2}, 
			\\
			& C_2^{\theta_1}(f) = 0, \text{ and } C_2^{\theta_2}(f) = 1.
		\end{align*}
		The probability distribution of states $\theta_1$ and $\theta_2$ is given by ${\varphi^{\tr} = (\varphi^\tr_1, \varphi^\tr_2)}$, where $\theta_1$ occurs with probability $\varphi^\tr_1 = 0.5$ and $\theta_2$ occurs with probability $\varphi^\tr_2 = 1 - \varphi^\tr_1$. The distribution $\varphi^\tr$ is assumed to be known to the TIS. The goal of the TIS is to minimize the long-term average social cost, which is a function of the signalling scheme. For a general network, given the prior belief $q$, the state $\theta_{s}$, and a message $\zeta^u$, the incurred social cost is given by
		\begin{align} \label{eq:cost-per-stage}
			J_q^{\stage}(\zeta^u,\theta_{s}) := \sum_{p \in \PP}\widetilde{f}_p^{\zeta^u}C_p^{\theta_{s}}(\widetilde{f}^{\zeta^u}),
		\end{align}
		where $\widetilde{f}^{\zeta^u}$ is a $\widetilde{q}^{\zeta^u}$-WE. Note that for any two $\widetilde{q}^{\zeta^u}$-WE, say $\widetilde{f}^{\zeta^u}$ and $\widehat{f}^{\zeta^u}$, we have $\widetilde{f}^{\zeta^u}_{e_k} = \widehat{f}^{\zeta^u}_{e_k}$ for all $e_k \in \EE$. From~\eqref{eq:path-cost-from-edges} and~\eqref{eq:edge-flows-defined}, we then conclude that~\eqref{eq:cost-per-stage} is independent of the choice of $\widetilde{q}^{\zeta^u}$-WE.
		The long-term average cost will be the sum of $J_q^{\stage}(\zeta^u,\theta_{s})$ over all possible combinations of signals $\zeta^u$ and states $\theta_s$, weighted by the probability $\varphi^\tr_s$ that $\theta_{s}$ occurs, and the probability $\phi_s^u$ of signal $\zeta^u$ being send when $\theta_{s}$ occurs. This will therefore depend on the number of signals that the scheme employs. However, in \cite[Proposition 3]{OM-CL:19} it is shown that a public signalling scheme $\Phi \in \colsto{z}{m}$ needs no more than $m$ signals to achieve the optimum and therefore we set $z = m$. Summarizing this, the long-term average cost that the TIS aims to minimize is given by
		\begin{align*}
			J_q(\Phi): = \sum_{s \in [m]} \sum_{u \in [m]} \varphi^\tr_s \phi^u_s \sum_{p \in \PP}\widetilde{f}_p^{\zeta^u}C_p^{\theta_{s}}(\widetilde{f}^{\zeta^u}).
		\end{align*}
		When the TIS knows the prior belief $q$, it aims to find a scheme $\Phi$ that minimizes $J_q(\Phi)$. When the TIS does not know this prior belief, it assumes it to be same as the the probability distribution of states $\varphi^\tr$, and therefore employs a signalling scheme $\Phi$ that minimizes $J_{\varphi^\tr}(\Phi)$. Whenever $q \neq \varphi^\tr$, designing a signalling scheme using $\varphi^\tr$ as an estimate of $q$ can increase the social cost. This we show in Figure~\ref{fig:cost_unknown_prior}. The horizontal axis in the plot depicts the prior held by the users and since we only consider two states, it is completely specified by the first component $q_1$ of the two-dimensional vector $q$. The blue line shows the long-term average cost of the game when the TIS uses $\varphi^\tr$ as an estimate of the prior $q$ and employs a signalling scheme that minimizes $J_{\varphi^\tr}$. The dashed orange line shows the cost achieved when the TIS uses the exact knowledge of $q$ and employs an optimal signalling scheme minimizing $J_q$. We see that the TIS with full knowledge performs better, with the difference becoming more pronounced as $q$ moves further away from $\varphi^\tr$.
		\oprocend
	}
\end{example}
The above example highlights the need for the TIS to accurately know the prior in order to find the optimal scheme minimizing social cost. Motivated by this fact, the following section discusses how observations of Wardrop equilibria, and knowledge of the signalling scheme $\Phi$ can be used by the TIS to infer the prior $q$.
\section{Inferring the prior: General case}\label{sec:general}
In this section we study how observations of Wardrop equilibria can be used to obtain information about the prior. Section~\ref{ssec:dist-eq} investigates the relationship between path-flows under $\varphi$-WE, edge-flows under $\varphi$-WE, and the distribution $\varphi$. In Section~\ref{ssec:existence} we use the gained insights to show that under very mild conditions, there always exists a signalling scheme allowing for the identification of the prior. After this, in Section \ref{ssec:design} we give a procedure for sequentially updating a given signalling scheme in order to find a scheme allowing for the identification of the prior. At the end of Section \ref{ssec:design} we provide an illustrative example.

We will start by briefly introducing the intuition behind the ideas presented in this section. Consider the case where the TIS provides no information to the users and the resulting flow $f^q$ is 
a $q$-WE as players base their routing choices on the prior.\footnote{The same situation can be achieved by using a signalling scheme which supplies no information, for instance by setting $\phi^u_s = \frac{1}{z}$ for all $u,s$.} From Definition \ref{def:qbased-WE} we know that $f^q$ satisfies \eqref{eq:def-of-WE-constraint} where $\varphi$ is replaced by $q$. That is,
\begin{equation} \label{eq:VI-cond-specific}
	\begin{split}
		C^q_p(f^q) & \! = \! C_r^q(f^q), \quad \forall p,r \! \in \! \PP \text{ such that } f^q_p,f^q_r \! > \! 0,
		\\
		C^q_p(f^q) &\! \leq \! C_r^q(f^q), \quad \forall p,r \! \in \! \PP \text{ such that } f^q_p \! > \! 0, f^q_r \! = \! 0.
	\end{split}
\end{equation}
Defining the matrix-valued map $C^{\texttt{mat}}:\HH \to \realnonnegative^{n \times m}$ as
\begin{align*}
	C^{\texttt{mat}}(f) := \left(\begin{array}{cccc} C_1^{\theta_1}(f) & C_1^{\theta_2}(f) & \dots & C_1^{\theta_m}(f) 
		\\
		C_2^{\theta_1}(f) & C_2^{\theta_2}(f) & \dots & C_2^{\theta_m}(f) \\
		\vdots & \vdots & & \vdots  \\
		C_n^{\theta_1}(f) & C_n^{\theta_2}(f) & \dots & C_n^{\theta_m}(f)
	\end{array}\right),
\end{align*}
we have $C^{q}(f^q) = C^{\texttt{mat}}(f^q) \! \cdot \! q$ and so \eqref{eq:VI-cond-specific} can be rewritten as
\begin{equation} \label{eq:VI-cond-specific2}
	\begin{split}
		\big(C_p^{\texttt{mat}}(f^q) \! - \! C_r^{\texttt{mat}}(f^q)\big) q \! &= \! 0, \,  \forall p,r \text{ with } f^q_p,f^q_r \! > \! 0,
		\\
		\big(C_p^{\texttt{mat}}(f^q) \! - \! C_r^{\texttt{mat}}(f^q)\big) q \! &\leq \! 0, \,  \forall p,r \text{ with } f^q_p \! > \! 0, f^q_r \! = \! 0,
	\end{split}
\end{equation}
where $C_p^{\texttt{mat}}(f)$ denotes the $p$-th row of $C^{\texttt{mat}}(f)$. Given a $q$-WE $f^q$, the above gives constraints on the possible values that the prior can take. In this way the equilibrium flow $f^q$ can help us in identifying the prior.
Most information can be obtained from the equality constraints, though it is also possible that a combination of equality and inequality constraints together result in additional equality constraints. In addition to the above, we also have the constraint ${\sum_{s \in [m]}q_s = 1}$, which is linearly independent from all equality constraints obtained from \eqref{eq:VI-cond-specific2}\footnote{An intuitive way to see this is as follows. When $f^q$ is fixed, for any $q$ that satisfies the constraints in \eqref{eq:VI-cond-specific}, $cq$ will also satisfy these constraints for any $c \in \realnonnegative$. This is clearly not the case for the constraint $\sum_{s \in [m]}q_s = 1$}. In this way we find a number of linearly independent equality constraints on $q$. Since $q \in \real^m$ we need $m$ such constraints in order to uniquely determine $q$. If the flow $f^q$ does not allow us to determine $q$ uniquely, we can use a public signalling scheme $\Phi$ to induce different posteriors. These posteriors will lead to different equilibrium flows resulting in equality constraints of the form \eqref{eq:VI-cond-specific2}, where $q$ and $f^q$ are replaced with $\widetilde{q}$ and $f^{\widetilde{q}}$, respectively. Using \eqref{eq:Bayes-rule}, these constraints on the posterior $\widetilde{q} = \widetilde{q}^{\zeta^u}$ can be rewritten into constraints on the prior $q$, by noting that
\begin{align*}
	\big(C_p^{\texttt{mat}}(f) -  C_r^{\texttt{mat}}(f)\big) \widetilde{q}^{\zeta^u}	= \sum_{s \in [m]} \frac{\phi^u_s \big(C_p^{\theta_{s}}(f) - C_r^{\theta_{s}}(f)\big) q_s}{\sum_{\ell \in [m]} \phi^{u}_{\ell} q_{\ell}} .
\end{align*}
Thus constraints on the prior $q$ imposed by observing the equilibrium flow $\widetilde{f}^{\zeta^u}$ are of the form
\begin{subequations}\label{eq:posterior-WE-constraint}
	\begin{align}
		&\sum_{s \in [m]} \! \phi^u_s \big(C_p^{\theta_{s}}(\widetilde{f}^{\zeta^u}) \! - \! C_r^{\theta_{s}}(\widetilde{f}^{\zeta^u})\big)q_s \! = \! 0, \label{eq:posterior-WE-constraint-1}
		\\
		&\sum_{s \in [m]} \! \phi^u_s \big(C_p^{\theta_{k}}(\widetilde{f}^{\zeta^u}) \! - \! C_r^{\theta_{s}}(\widetilde{f}^{\zeta^u})\big)q_s \! \leq \! 0,  \label{eq:posterior-WE-constraint-2}
	\end{align}
\end{subequations}
where~\eqref{eq:posterior-WE-constraint-1} holds for all $p,r$ with $\widetilde{f}^{\zeta^u}_p,\widetilde{f}^{\zeta^u}_r  >  0$ and~\eqref{eq:posterior-WE-constraint-2} holds for all $p,r$ such that $\widetilde{f}^{\zeta^u}_p  >  0$ and $\widetilde{f}^{\zeta^u}_r  =  0$.
In the above conditions, the denominator has been dropped, since it is the same for each term in the summation, and assumed to be positive. For a signalling scheme $\Phi$ we denote the set of all priors satisfying all obtained constraints from all signals as 
\begin{align*}
	\QQ_{\Phi} = 
	\setdef{q \in \Delta_1^m}{ q \text{ satisfies \eqref{eq:posterior-WE-constraint}} \text{ for all } u \in [z]}.
\end{align*}
The above seems to depend on which specific $\widetilde{q}^{\zeta^u}$-WE $\widetilde{f}^{\zeta^u}$ are observed. However it follows from upcoming results, specifically Corollary \ref{cor:Q-independent-of-specific-flow}, that this is not the case. We give the following definition:
\begin{definition}\longthmtitle{$q$-identifying signalling scheme}
	Given a set of paths $\PP$, states $\Theta$, cost functions $\CC$, and a prior $q \in \Delta^m_1$, a signalling scheme $\Phi \in \colsto{s}{m}$ is called $q$-identifying if $\QQ_{\Phi} = \{q\}$.
\end{definition}
The main focus of this paper is addressing the question ``How can we design $\Phi$ so as to ensure that it is $q$-identifying?''
Before we can discuss this however, we will first investigate the relations between the distribution $\varphi$, the associated $\varphi$-WE $f^{\varphi}$, and the related edge-flows $f^\varphi_{e_k}$.
\subsection{Probability distribution and equilibrium} \label{ssec:dist-eq}
The results in upcoming sections build upon three lemma's presented here, which give insight in how the edge-flows under $\varphi$-WE, path-flows under $\varphi$-WE, and the distribution $\varphi$ relate to each other. To ease the exposition of the first lemma, we introduce the following notation:
\begin{equation*}
	\HH_e \! := \! \setdef{v \in \realnonnegative^{\abs{\EE}}}{\exists f \in \HH \text{ such that } v_k = f_{e_k} \enskip \forall k \! \in \! [\abs{\EE}]}.
\end{equation*}
Here, $f_{e_k}$ is defined by \eqref{eq:edge-flows-defined}. Note that since $\HH$ is compact, so is $\HH_e$. We use this set in the proof of the following result, which shows that the edge-flows under $\varphi$-WE change continuously with respect to $\varphi$.
The first of these results shows that the edge-flows under $\varphi$-WE change continuously with respect to $\varphi$.
\begin{lemma} \longthmtitle{Continuity of $\varphi$-WE edge-flows} \label{lem:continuity-edge-flow}
	Let $\PP$, $\Theta$, and $\CC$ be given. For every $\epsilon > 0$, there exists a $\delta > 0$ such that for any two distributions $\varphi, \xi \in \Delta_1^m$, we have
	\begin{equation*}
		\norm{\varphi - \xi} < \delta \Rightarrow \abs{f^\varphi_{e_k} - f^\xi_{e_k}} < \epsilon \quad \forall e_k \in \EE.
	\end{equation*}
	In other words, the edge-flows under $\varphi$-WE depend continuously on the distribution $\varphi$.
\end{lemma}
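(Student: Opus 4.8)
The plan is to argue by contradiction using sequential compactness, exploiting two facts: the joint continuity of the weighted cost map in the pair (distribution, flow), and the uniqueness of edge-flows across all WE of a fixed distribution, recorded in \eqref{eq:edge-flow-constant}.

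First I would negate the claim: suppose there exist $\epsilon_0 > 0$ and sequences $(\varphi^j), (\xi^j)$ in $\Delta_1^m$ with $\norm{\varphi^j - \xi^j} \to 0$, while for each $j$ there is an edge $e_{k(j)}$ with $\abs{f^{\varphi^j}_{e_{k(j)}} - f^{\xi^j}_{e_{k(j)}}} \ge \epsilon_0$. Since $\EE$ is finite, some edge $e_k$ serves as $e_{k(j)}$ for infinitely many $j$, so after passing to a subsequence we may assume $\abs{f^{\varphi^j}_{e_k} - f^{\xi^j}_{e_k}} \ge \epsilon_0$ for all $j$. For each $j$ pick $f^j \in \WW^{\varphi^j}$ and $g^j \in \WW^{\xi^j}$; these exist since a $\varphi$-WE is a solution of $\VI(\HH, C^\varphi)$, a VI with a continuous map over the compact convex set $\HH$. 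Using compactness of $\HH$ and $\Delta_1^m$, pass to further subsequences so that $f^j \to f^*$, $g^j \to g^*$, and $\varphi^j \to \varphi^*$; then $\xi^j \to \varphi^*$ as well.

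Next I would show $f^*, g^* \in \WW^{\varphi^*}$. Each $f^j$ satisfies $(f - f^j)^\top C^{\varphi^j}(f^j) \ge 0$ for all $f \in \HH$. The map $(\varphi, f) \mapsto C^\varphi(f)$ is jointly continuous: its $p$-th component $C_p^\varphi(f) = \sum_{s \in [m]} \varphi_s \sum_{e_k \in p} C_{e_k}^{\theta_s}(f_{e_k})$ is linear in $\varphi$ and continuous in $f$ because each $C_{e_k}^{\theta_s}$ is continuous. Letting $j \to \infty$ therefore yields $(f - f^*)^\top C^{\varphi^*}(f^*) \ge 0$ for all $f \in \HH$, i.e. $f^* \in \WW^{\varphi^*}$; the same argument gives $g^* \in \WW^{\varphi^*}$. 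By \eqref{eq:edge-flow-constant} applied to the distribution $\varphi^*$, it follows that $f^*_{e_k} = g^*_{e_k}$. On the other hand the edge-flow map $f \mapsto f_{e_k} = \sum_{p \ni e_k} f_p$ is linear, hence continuous, so $f^j_{e_k} \to f^*_{e_k}$ and $g^j_{e_k} \to g^*_{e_k}$; since $f^j_{e_k}$ is the unique edge-flow $f^{\varphi^j}_{e_k}$ and likewise for $g^j$, passing to the limit in $\abs{f^{\varphi^j}_{e_k} - f^{\xi^j}_{e_k}} \ge \epsilon_0$ gives $\abs{f^*_{e_k} - g^*_{e_k}} \ge \epsilon_0 > 0$, a contradiction.

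I expect the main obstacle to be the closedness-of-the-solution-map step: verifying that a limit of WE, for distributions converging to $\varphi^*$, is itself a $\varphi^*$-WE. This is exactly where joint continuity of the cost map and the fact that the feasible set $\HH$ is \emph{fixed} and compact are essential; the rest is bookkeeping with subsequences. I would also mention an alternative route that avoids the explicit VI manipulation: the edge-flow vector is the unique minimizer over the compact convex set $\HH_e$ of the Beckmann-type potential $v \mapsto \sum_{k} \int_0^{v_k} C^\varphi_{e_k}(x)\,dx$, which is jointly continuous in $(\varphi, v)$ and, since each $C^\varphi_{e_k}$ is strictly increasing for $\varphi \in \Delta_1^m$, strictly convex in $v$; Berge's maximum theorem together with single-valuedness of the argmin then gives continuity of $\varphi \mapsto f^\varphi_{\mathrm{edge}}$ on $\Delta_1^m$, and the stated uniform continuity follows from compactness of $\Delta_1^m$.
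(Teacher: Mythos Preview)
Your main argument via sequential compactness and passing to the limit in the VI characterization is correct, and it differs from the paper's proof. The paper instead takes precisely the alternative route you sketch at the end: it invokes the Beckmann potential, observes that the edge-flow vector is the unique minimizer of a convex program over the fixed compact set $\HH_e$ whose objective is linear in the parameter $\varphi$, and then cites a parametric-optimization continuity result (a Berge-type statement) to conclude that $\varphi \mapsto f^\varphi_{\mathrm{edge}}$ is continuous. Your VI-limit argument is more self-contained, avoiding both the optimization reformulation and the external reference, at the cost of a bit more bookkeeping with subsequences; the paper's route is shorter once the Berge-type result is taken as a black box. Both approaches ultimately hinge on the same two structural ingredients---compactness of a feasible set that does not vary with $\varphi$, and uniqueness of the edge-flow for each fixed $\varphi$---so neither has a real advantage in generality here.
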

\begin{proof}
	For $\varphi \in \Delta_1^m$, recall the notation of $C^{\varphi}_{e_k}$ from~\eqref{eq:weighted-path-cost}. Following~\cite{MB-CBM-CBW:56}, a flow vector $f^\varphi \in \HH$ is a $\varphi$-WE if and only if it is a solution of the following optimization problem:
	\begin{equation}\label{eq:opt-path}
		\min_{f \in \HH} \, \,  \sum_{e_k \in \EE} \int_0^{f_{e_k}} C_{e_k}^\varphi(t)dt,
	\end{equation}
	where for a path-flow $f$, the quantity $f_{e_k}$ is the corresponding flow on edge $e_k$ given by~\eqref{eq:edge-flows-defined}. Recall from~\cite{RC-VD-MS:21} that while the $\varphi$-WE need not be unique, the edge-flows induced by them are. Thus, following~\eqref{eq:opt-path}, the edge-flows associated to $\varphi$-WE are given by the unique solution of the following problem:
	\begin{equation} \label{eq:Beckmann}
		\min_{v \in \HH_e} \sum_{e_k \in \EE} \int_0^{v_k} \sum_{s \in [m]} \varphi_s C^{\theta_{s}}_{e_k}(t)dt.
	\end{equation}
	Consider the above optimization problem with $\varphi$ as a parameter. Given $\varphi$, denote the optimal solution as $f^\varphi_{\mathrm{edge}}$. Since the objective function of the above problem depends linearly on $\varphi$ and the domain is compact and independent of $\varphi$, we deduce from~\cite[Proposition 4.4]{JFB-ASH:2013} that the map $\varphi \mapsto f^\varphi_{\mathrm{edge}}$ is continuous. This concludes the proof.
\end{proof}
To ease the exposition of the next result, we define
\begin{equation*}
	\RR^{\mathrm{use}}_\varphi := \setdef{p \in \PP}{\exists f^\varphi \in \WW^\varphi \text{ such that } f^\varphi_p > 0}.
\end{equation*}
That is, $\RR^{\mathrm{use}}_\varphi$ denotes the set of all paths $p$ for which there exists a $\varphi$-WE such that a positive amount of flow is routed onto path $p$. We call these paths the \emph{used} paths. The set of $\varphi$-WE then has the following useful properties:
\begin{lemma}\longthmtitle{Characterizing used paths of $\varphi$-WE}\label{lem:used-set-properties}
	Let $\PP$, $\Theta$, $\CC$, and $\varphi \in \Delta_1^m$ be given. We have the following:
	\begin{enumerate}
		\item There exists an $f^\varphi \! \in \! \WW^\varphi$ satisfying $f_p^\varphi \! > \! 0$ for all ${p \! \in \! \RR^{\mathrm{use}}_\varphi}$.
		\item We have ${p \in \RR^{\mathrm{use}}_\varphi}$ if and only if $f^\varphi_{e_k} > 0$ for all $e_k \in p$.
	\end{enumerate}
\end{lemma}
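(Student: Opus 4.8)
The plan is to prove the two statements essentially together, leaning on the convexity structure of problem~\eqref{eq:opt-path} (equivalently the Beckmann formulation~\eqref{eq:Beckmann}) and on the fact, recalled just before the lemma, that the edge-flows $f^\varphi_{e_k}$ are the same for every $\varphi$-WE. The key observation I would isolate first is that $\WW^\varphi$ is a \emph{convex} set: since a $\varphi$-WE is exactly a minimizer of the convex program~\eqref{eq:opt-path} over the convex set $\HH$, any convex combination of two $\varphi$-WE is again a minimizer, hence again a $\varphi$-WE. (Strict convexity fails at the path-flow level, which is why path-flows are non-unique; but mere convexity of the solution set is all I need here.)

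For part~(1), I would take, for each $p \in \RR^{\mathrm{use}}_\varphi$, a $\varphi$-WE $g^{(p)}$ with $g^{(p)}_p > 0$, which exists by definition of $\RR^{\mathrm{use}}_\varphi$. Since $\RR^{\mathrm{use}}_\varphi$ is a finite set, I form the uniform (or any strictly positive) convex combination $f^\varphi := \frac{1}{|\RR^{\mathrm{use}}_\varphi|}\sum_{p \in \RR^{\mathrm{use}}_\varphi} g^{(p)}$. By convexity of $\WW^\varphi$ this is a $\varphi$-WE, and because all flows are nonnegative, $f^\varphi_p \ge \frac{1}{|\RR^{\mathrm{use}}_\varphi|} g^{(p)}_p > 0$ for every $p \in \RR^{\mathrm{use}}_\varphi$. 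This settles part~(1); it is the easy half.

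For part~(2), the ``only if'' direction is immediate from part~(1): pick the $f^\varphi$ just constructed; if $p \in \RR^{\mathrm{use}}_\varphi$ then $f^\varphi_p > 0$, and by the definition of edge-flows~\eqref{eq:edge-flows-defined}, $f^\varphi_{e_k} = \sum_{p' \ni e_k} f^\varphi_{p'} \ge f^\varphi_p > 0$ for every $e_k \in p$; by uniqueness of edge-flows among $\varphi$-WE this holds for the canonical $f^\varphi_{e_k}$. The ``if'' direction is where the real work is: suppose $f^\varphi_{e_k} > 0$ for all $e_k \in p$; I must produce a $\varphi$-WE that routes positive flow on $p$. I would argue by a flow-rerouting/augmentation argument. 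Take any $\varphi$-WE $f^\varphi$ (so its edge-flows are the canonical positive ones on $p$). On each edge $e_k \in p$ there must be some used path $p^{(k)}$ through $e_k$ carrying positive flow (since $f^\varphi_{e_k} = \sum_{p' \ni e_k} f^\varphi_{p'} > 0$). The idea is to shift a small amount $\eta > 0$ of flow: decrease the flow on these witness paths and increase the flow on $p$, in a way that keeps all edge-flows unchanged. Concretely, one can decompose the current path-flow into flow along cycles and along the path $p$ relative to the witnesses; because $p$ and the $p^{(k)}$ share the edges of $p$ edge-by-edge, a suitable signed combination that adds $\eta$ to $f_p$ and removes flow from the $p^{(k)}$ (and possibly adds flow to complementary ``leftover'' pieces of the $p^{(k)}$) leaves every edge-flow invariant. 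Since edge-flows are unchanged, the edge-costs $C^\varphi_{e_k}$ are unchanged, hence the variational-inequality / Wardrop conditions~\eqref{eq:def-of-WE-constraint} — which depend on $f$ only through the edge-flows — are still satisfied, and the new flow is feasible for small enough $\eta$. Thus it is a $\varphi$-WE with positive flow on $p$.

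The main obstacle I anticipate is making the rerouting in the ``if'' direction clean: one has to verify that the flow can be redistributed among paths while (i) keeping every edge-flow exactly fixed, (ii) keeping all path-flows nonnegative for small $\eta$, and (iii) keeping the total flow equal to $1$. The cleanest route is probably to invoke a flow-decomposition theorem: the difference between two feasible path-flows with identical edge-flows lies in the cycle space, so it suffices to exhibit \emph{one} feasible path-flow with $f_p > 0$ realizing the canonical edge-flows, and then note it is automatically a $\varphi$-WE because the Wardrop/VI conditions are functions of the edge-flows alone. Phrasing it this way — ``any feasible path-flow whose edge-flows coincide with the canonical $\varphi$-WE edge-flows is itself a $\varphi$-WE'' — converts part~(2) into the purely combinatorial statement that if every edge of an acyclic $v_o$–$v_d$ path carries positive flow, then the flow can be decomposed so as to include $p$ with positive weight, which follows from a standard greedy path-peeling argument on the support subgraph.
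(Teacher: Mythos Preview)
Your proposal is correct and follows essentially the same route as the paper: convexity of $\WW^\varphi$ plus a strictly positive convex combination for part~(1), and for the ``if'' direction of part~(2) the reduction to ``any feasible path-flow with the canonical edge-flows is a $\varphi$-WE'' followed by a greedy path-peeling decomposition that starts with the given path~$p$. The paper additionally records that a $\varphi$-WE edge-flow cannot carry a positive-flow directed cycle (otherwise the Beckmann potential~\eqref{eq:Beckmann} could be decreased), which is what guarantees the peeling terminates and exhausts all flow; you should make this step explicit rather than folding it into ``standard,'' and you can drop the preliminary $\eta$-rerouting sketch, since the decomposition argument you land on is both cleaner and complete.
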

\begin{proof}
	The set $\WW^\varphi$ is convex. This can be deduced from~\eqref{eq:edge-flow-constant} and noting that if two path flows induce the same edge flow, then any convex combination of these flows will still induce that edge flow. The first claim follows from convexity of $\WW^\varphi$. To see the complete reasoning, denote first for any $r \in \RR^{\mathrm{use}}_\varphi$, a flow $f^{\varphi,r} \in \WW^\varphi$ as a WE flow where $f^{\varphi,r}_r > 0$. Such a flow exists by the definition of $\RR^{\mathrm{use}}_\varphi$. Next select scalars $c_r > 0$ for all $r \in \RR^{\mathrm{use}}_\varphi$ such that $\sum_{r \in \RR^{\mathrm{use}}_\varphi} c_r = 1$. Using the selected WE flows and scalars, define $f^{\mathrm{use}} := \sum_{r \in \RR^{\mathrm{use}}_\varphi} c_r f^{\varphi,r}$. Note that $f^{\mathrm{use}} \in \WW^\varphi$ as this set is convex. Finally, by definition of $\{f^{\varphi,r},c_r\}$ and the fact that all WE flows are nonnegative, we deduce that $f^{\mathrm{use}}_r > 0$ for all $r \in \RR^{\mathrm{use}}_\varphi$. This establishes the first claim.
	
	For the second claim, the ``only if'' part is easier to deduce. Let $p \in \RR^{\mathrm{use}}_\varphi$ and let $f^{\varphi,p} \in \WW^\varphi$ satisfy $f^{\varphi,p}_p > 0$. Since $f^{\varphi,p}_r \geq 0$ for all $r \in \PP$, it follows from~\eqref{eq:edge-flows-defined} that $f^{\varphi,p}_{e_k} > 0$ for all $e_k \in p$. For the other direction, we provide a sketch of arguments here in the interest of space. First, we note that for a $\varphi$-WE, a total flow of unity enters and leaves the network at the origin and destination, respectively, while for all other vertices the flow satisfies mass-conservation constraints. That is, the total flow entering and leaving a vertex are equal. Second, it can also be shown that $\varphi$-WE does not contain any cycle with a positive amount of flow on all its edges. To see this, note that reducing the flow equally from all edges in such a cycle will preserve mass conservation and inflow and outflow constraints, while the value of \eqref{eq:Beckmann} decreases. Thus, with the presence of a positive-flow cycle, the path-flow can not be a $\varphi$-WE. Lastly, consider any path $p$ such that $f^\varphi_{e_k} > 0$ for all $e_k \in p$. Set $f_p^{\varphi} := \min_{e_k \in p} f^\varphi_{e_k}$ and then subtract $f_p^{\varphi}$ of flow from all edges in $p$. The new flow will then still satisfy mass-conservation constraints, but the inflow and outflow at the origin and destination have both decreased by $f_p^{\varphi}$. Continue this procedure until all flow has been assigned
	and the result is a feasible flow $f^{\varphi}$ which induces the same edge-flow as any $\varphi$-WE. Therefore, $f^{\varphi}$ is a WE, and it satisfies $f^{\varphi}_p > 0$ for any desired $p \in \RR^{\mathrm{use}}_{\varphi}$ by construction, which concludes the proof. The procedure of assigning flow is treated in more detail in \cite[Theorem 2.1]{RKA-TLM-JBO:88}.
\end{proof}
In the next result we show that for a given $f \in \HH$, the set of all distributions $\varphi$ such that $f \in \WW^\varphi$ is compact and convex. 
\begin{lemma}\longthmtitle{Convexity of set of distributions inducing the same $\varphi$-WE} \label{lem:interval-of-constant-WE}
	Let $\PP$, $\Theta$, $\CC$, and $f \in \HH$ be given. The set of distributions $\varphi$ with $f \in \WW^\varphi$ is compact and convex.   
\end{lemma}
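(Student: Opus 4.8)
The plan is to observe that, once the flow $f \in \HH$ is held fixed, the conditions defining a $\varphi$-WE in Definition~\ref{def:qbased-WE} become a finite system of \emph{linear} inequalities in the variable $\varphi$. To make this precise, set $\PP^+ := \setdef{p \in \PP}{f_p > 0}$; note that $\PP^+$ is a fixed subset of $\PP$, since it depends only on the given $f$ and not on $\varphi$. By Definition~\ref{def:qbased-WE}, a distribution $\varphi \in \Delta_1^m$ satisfies $f \in \WW^\varphi$ if and only if $C_p^\varphi(f) \le C_r^\varphi(f)$ for every $p \in \PP^+$ and every $r \in \PP$.

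Next I would use that, with $f$ fixed, the numbers $C_p^{\theta_s}(f)$ appearing in \eqref{eq:weighted-path-cost} are constants, so each weighted cost $C_p^\varphi(f) = \sum_{s \in [m]} \varphi_s C_p^{\theta_s}(f)$ is a linear function of $\varphi$. Consequently each requirement $C_p^\varphi(f) \le C_r^\varphi(f)$ cuts out a closed half-space
\[
	H_{p,r} := \setdefB{\varphi \in \real^m}{\sum_{s \in [m]} \varphi_s \big(C_p^{\theta_s}(f) - C_r^{\theta_s}(f)\big) \le 0},
\]
which is convex and closed. Writing $\Lambda_f := \setdef{\varphi \in \Delta_1^m}{f \in \WW^\varphi}$, we then have
\[
	\Lambda_f = \Delta_1^m \cap \bigcap_{p \in \PP^+} \bigcap_{r \in \PP} H_{p,r},
\]
a finite intersection of closed convex sets. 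Since $\Delta_1^m$ is itself compact and convex, $\Lambda_f$ is closed, bounded, and convex, hence compact and convex; and if $\Lambda_f = \emptyset$ the conclusion holds vacuously. This is exactly the assertion of the lemma.

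I do not expect a genuine obstacle here: the only point that needs care is recognizing that the index set $\PP^+$ of paths carrying positive flow in the WE conditions does not change as $\varphi$ varies, because it is determined solely by the fixed flow $f$. Once this is noted, the WE conditions reduce to finitely many linear constraints and the result follows from elementary properties of polyhedra intersected with the simplex. In particular, neither Lemma~\ref{lem:continuity-edge-flow} nor Lemma~\ref{lem:used-set-properties} is needed for this argument.
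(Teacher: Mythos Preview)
Your proof is correct and follows essentially the same approach as the paper: once $f$ is fixed, the Wardrop equilibrium conditions in Definition~\ref{def:qbased-WE} become a finite family of affine (in)equalities in $\varphi$, so the feasible set is a closed convex polyhedron intersected with the compact simplex $\Delta_1^m$. The paper phrases this via the matrix form~\eqref{eq:VI-cond-specific2} and separates the equality and inequality cases, whereas you write everything uniformly as half-spaces $H_{p,r}$, but the argument is the same.
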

\begin{proof}
	For any distribution $\varphi \in \Delta_1^m$, we have $f \in  \WW^\varphi$ if and only if the constraints in \eqref{eq:VI-cond-specific} hold, where $q$ and $f^q$ are replaced with $\varphi$ and $f$, respectively. Since $f$ is fixed, the map $C^{\texttt{mat}}(f)$ is also fixed and we see that \eqref{eq:VI-cond-specific} imposes a number of equality and non-strict inequality constraints on $\varphi$, all of which are affine. Therefore, the set of $\varphi$ satisfying these constraints is convex and closed. Since distributions belong to a compact set $\Delta_1^m$, the claim follows.
\end{proof} 
We illustrate the implications of Lemma~\ref{lem:interval-of-constant-WE} using the following examples. For simplicity, we have chosen examples such that the $\varphi$-WE are unique.
\begin{example} \longthmtitle{Demonstration of Lemma~\ref{lem:interval-of-constant-WE}}\label{ex:constant-WE}
	{\rm
		Consider a 2-path, 2-state network, with cost functions given by
		\begin{equation*}
			\begin{aligned}
				&C_1^{\theta_1}(f) = 0.8 f_2 + 0.7,	&C_1^{\theta_2}(f) = 0.1 f_2 + 0.2,	\\
				&C_2^{\theta_1}(f) = 0.3 f_1 + 0.2,	&C_2^{\theta_2}(f) = 0.5 f_1 + 0.5.
			\end{aligned}
		\end{equation*}
		Figure \ref{fig:2-road-2-state} shows the relationship between the $\varphi$-WE and the distribution $\varphi$.
		\begin{figure}[t]
			\centering
			\subfloat[A graph showing the relationship between the $\varphi$-WE and the distribution $\varphi$ for a 2-path, 2-state scenario.]{\includegraphics[width = 0.35\textwidth]{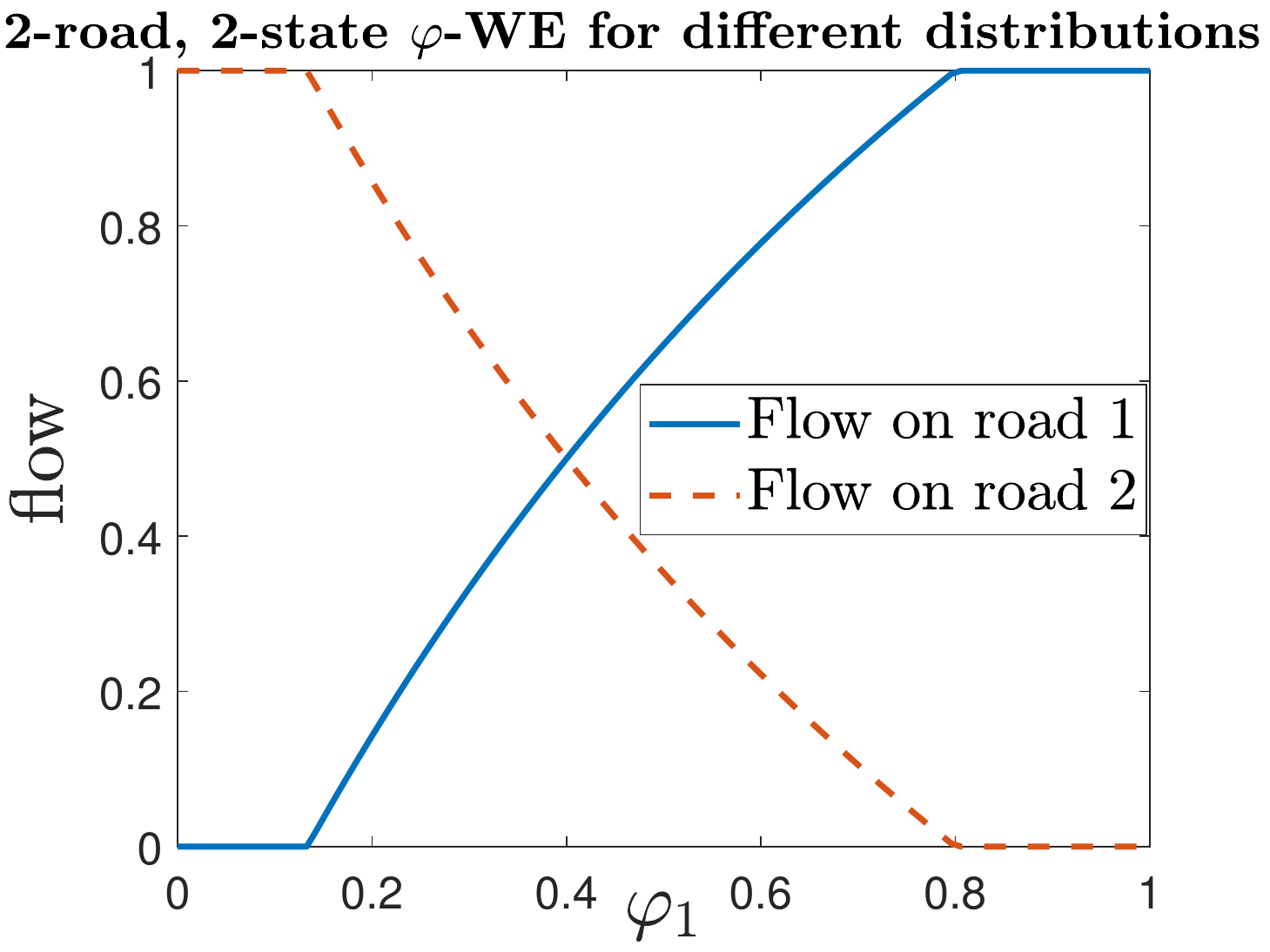}\label{fig:2-road-2-state}}
			\hfill
			\subfloat[A graph showing the relationship between the $\varphi$-WE and the distribution $\varphi$ for a 4-path, 2-state scenario.]{\includegraphics[width = 0.35\textwidth]{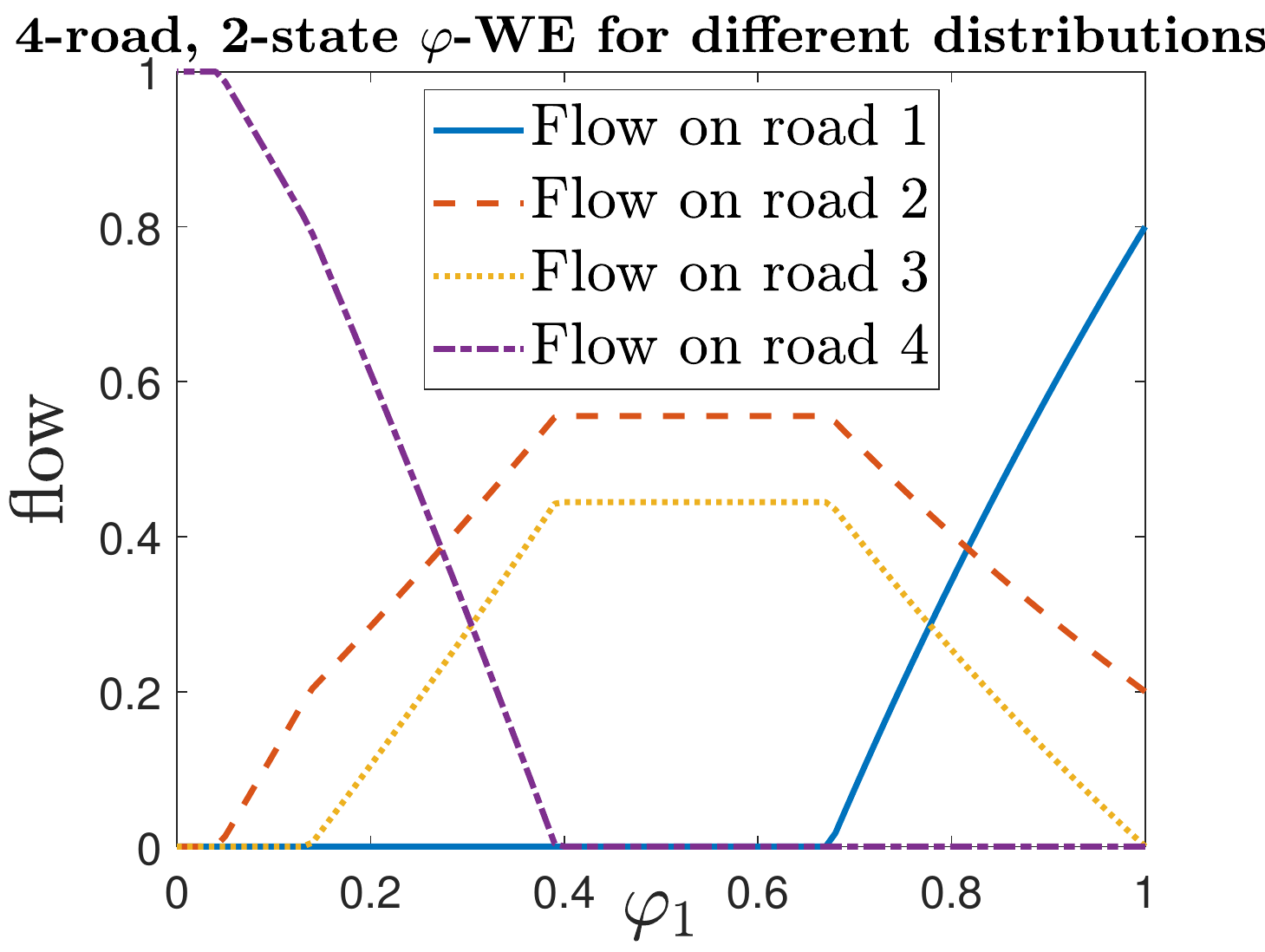}\label{fig:4-road-2-state}}
			\caption{Plots illustrating the regions of constant $\varphi$-WE for cases discussed in Example~\ref{ex:constant-WE}.}
		\end{figure}
		Note that ${\varphi_2 = 1 - \varphi_1}$. Therefore a distribution $\varphi$ is defined completely by $\varphi_1$. The figure shows that the $\varphi$-WE remains constant in two convex regions, namely when $\varphi_1 \leq 0.133$ and when ${\varphi_1 \geq 0.8}$. In one of these cases we have ${f^\varphi = f^{\theta_1} = (1,0)^\top}$ and in the other ${f^\varphi = f^{\theta_2} = (0,1)^\top}$. 
		
		Next we consider a 4-path, 2-state network, with the following cost functions:
		\begin{equation} \label{eq:4,2-example-cost}
			\begin{aligned}
				&C_1^{\theta_1}(f) =  f_1 + 1,	&C_1^{\theta_2}(f) = 0.4 f_1 + 4, \hspace{8 pt}	\\
				&C_2^{\theta_1}(f) = 0.5 f_2 + 1.7, &C_2^{\theta_2}(f) = 0.5 f_2 + 1.7,	\\
				&C_3^{\theta_1}(f) = 0.4 f_3 + 1.8,	&C_3^{\theta_2}(f) = 0.4 f_3 + 1.8,	\\
				&C_4^{\theta_1}(f) = 0.4 f_4 + 3.5,	&C_4^{\theta_2}(f) = 0.6 f_4 + 1. \hspace{8 pt}
			\end{aligned}
		\end{equation}
		Figure \ref{fig:4-road-2-state} shows the dependency between the distribution and the WE. We see that the situation has changed compared to the 2-path, 2-state case. Here we find a region in which the $\varphi$-WE remains constant while not being equal to $f^{\theta_1}$ or $f^{\theta_2}$ or having all flows on one path. We do see that the sets of distributions in which the $\varphi$-WE remains constant are convex, which is in line with Lemma \ref{lem:interval-of-constant-WE}
		
		Although it is perhaps not directly apparent from Lemma~\ref{lem:interval-of-constant-WE}, a consequence of that result is that for any distribution which is not in a convex set where the $\varphi$-WE remains constant, the associated $\varphi$-WE is unique to that distribution. When such a flow is observed, we can derive the unique distribution which induced it. If a $\varphi$-WE is observed that can be induced by multiple distributions, we can at best limit the distribution that induced the flow to a set. Thus, regions of $\Delta_1^m$ where the $\varphi$-WE remains constant are less helpful in identifying $q$, and should be avoided when attempting to design a $q$-identifying signalling scheme.
	} \oprocend
\end{example}
\subsection{Existence of a sufficient signalling scheme} \label{ssec:existence}
Here we discuss the existence of signalling schemes that allow the TIS to identify $q$. Our strategy involves first showing existence for the simplified case where $\Theta = \{\theta_1,\theta_2\}$. We later use this result for the more general case $\Theta = \{\theta_1,\cdots, \theta_m\}$, $m \in \mathbb{N}$ by designing our signalling scheme $\Phi$ in such a way that the resulting posteriors only assign positive probability to exactly two states. A key element of designing such a scheme is the set of flows that provide information regarding the distribution that induced it. In particular,  for the case $\Theta = \{\theta_1,\theta_2\}$, we define the set of \emph{informative} flows $\FF_{\info}$ as follows: 
\begin{equation} \label{eq:F-inf}
	\begin{aligned}
		\FF_{\info}   :=   \setdef{  f \!  \in \!  \HH  }{ f \! \notin \! \WW^{\theta_{1}} \! \cup \! \WW^{\theta_{2}}, \enskip f_p \! > \! 0 \enskip \forall p \! \in \! \RR^{\mathrm{use}}_{\varphi^{\theta_{1}}}}.
	\end{aligned}
\end{equation}
That is, $\FF_{\info}$ is the set of all flows that are not in the set of $\varphi^{\theta_{1}}$- or $\varphi^{\theta_{2}}$-WE, but which do contain a positive amount of flow on all paths that have a positive amount of flow for some $\varphi^{\theta_{1}}$-WE. The importance of this set lies in the fact that for the two-state case, observing a flow from this set allows us to uniquely identify which distribution induced that flow.
\begin{remark}\longthmtitle{Sufficiency of $\FF_{\info}$}
	\rm{We note that it is not necessary for a flow $f^{\varphi}$
		to lie in $\FF_{\info}$ in order to allow $\varphi$ to be identified. Any flow $f^\varphi$ that can only be induced by a unique distribution $\varphi$ will, when observed, necessarily allows us to identify the distribution $\varphi$ that induced it, while the set $\FF_{\info}$ limits the attention to flows with a special relation to the flows in $\WW^{\theta_{1}}$. The set $\FF_{\info}$ is however of special import in the coming results because under mild assumptions, we can identify conditions that allow the flow induced by a signal to be contained in $\FF_{\info}$. } \oprocend
\end{remark}
Before we move on to the results, we collect two useful properties of $\varphi$-WE here, both of which follow from that fact that a flow is a $\varphi$-WE if and only if it induces the same unique edge flow as all other $\varphi$-WE, as mentioned before in~\eqref{eq:edge-flow-constant}. The first property implies that for two distributions, the induced sets of WE overlap if and only if they are equal.
\begin{lemma}\longthmtitle{Intersection of sets of WE induced by two distributions}\label{lem:equality-and-intersection}
	Let $\PP$, $\Theta$, $\CC$, and two distributions $\varphi, \xi \in \Delta_1^m$ be given. Then, $\WW^\varphi \neq \WW^\xi$ if and only if $\WW^\varphi \cap \WW^\xi = \emptyset$.
\end{lemma}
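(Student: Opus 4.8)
The plan is to exploit the key structural fact recalled just before the statement: a feasible flow $f$ is a $\varphi$-WE if and only if it induces the unique edge-flow $f^\varphi_{e_k}$ associated with $\varphi$, i.e.\ \eqref{eq:edge-flow-constant}. This turns the set $\WW^\varphi$ into a ``fiber'' over a single edge-flow vector, and membership in $\WW^\varphi$ is entirely determined by that edge-flow vector. The backward direction ($\WW^\varphi \cap \WW^\xi = \emptyset \Rightarrow \WW^\varphi \neq \WW^\xi$) is immediate, since both sets are nonempty (a $\varphi$-WE always exists by the standard Beckmann/VI argument used in Lemma~\ref{lem:continuity-edge-flow}), so two disjoint nonempty sets cannot be equal.

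For the forward direction I would argue by contraposition: suppose $\WW^\varphi \cap \WW^\xi \neq \emptyset$ and show $\WW^\varphi = \WW^\xi$. Pick $f \in \WW^\varphi \cap \WW^\xi$. Since $f$ is a $\varphi$-WE, by \eqref{eq:edge-flow-constant} the edge-flows $f_{e_k}$ it induces equal the unique $\varphi$-WE edge-flows $f^\varphi_{e_k}$ for all $e_k \in \EE$; since $f$ is simultaneously a $\xi$-WE, the same edge-flows also equal $f^\xi_{e_k}$. Hence $f^\varphi_{e_k} = f^\xi_{e_k}$ for every edge. Now take an arbitrary $g \in \WW^\varphi$. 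By \eqref{eq:edge-flow-constant} applied to $\varphi$, $g$ induces edge-flows $g_{e_k} = f^\varphi_{e_k} = f^\xi_{e_k}$ for all $e_k$; but then, again by \eqref{eq:edge-flow-constant} applied to $\xi$ (the ``if'' direction), any feasible flow inducing the $\xi$-WE edge-flows is itself a $\xi$-WE, so $g \in \WW^\xi$. Thus $\WW^\varphi \subseteq \WW^\xi$, and the reverse inclusion follows by the symmetric argument. Therefore $\WW^\varphi = \WW^\xi$, completing the contrapositive and hence the forward implication.

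The only subtlety, and the step I would be most careful about, is making sure the characterization \eqref{eq:edge-flow-constant} is being used as a genuine ``if and only if'': it is what lets us conclude that $g$ having the right edge-flows is \emph{sufficient} for membership in $\WW^\xi$, not merely necessary. The text explicitly states this equivalence (citing \cite[Chapter 3]{MB-CBM-CBW:56}), so I would simply invoke it. One should also note in passing that $\WW^\varphi$ and $\WW^\xi$ are nonempty — needed for the backward direction — which again follows from existence of solutions to $\VI(\HH, C^\varphi)$ under the standing continuity and compactness assumptions on $\CC$ and $\HH$. No lengthy computation is required; the proof is a short chain of equivalences built on \eqref{eq:edge-flow-constant}.
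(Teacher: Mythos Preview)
Your proposal is correct and matches the paper's approach exactly: the paper does not even write out a separate proof, simply remarking that the lemma follows from the fact that a flow is a $\varphi$-WE if and only if it induces the same unique edge-flow as all other $\varphi$-WE, i.e.\ \eqref{eq:edge-flow-constant}. You have just spelled out that one-line observation carefully, including the nonemptiness of $\WW^\varphi$ and $\WW^\xi$ needed for the trivial direction.
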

The second property is that for two flows which are both WE induced by the same distribution, the sets of distributions for which these flow are a WE, respectively, are equal.
\begin{lemma}\longthmtitle{Equality of sets of distributions inducing two $\varphi$-WE} \label{lem:equality-of-inducing-sets}
	Let $\PP$, $\Theta$, and $\CC$ be given. For $\xi \in \Delta_1^m$, if we have $f^\xi \in \WW^\xi$ and $\widetilde{f}^\xi \in \WW^\xi$, then
	\begin{equation*}
		\setdef{\varphi \in \Delta_1^m}{f^\xi \in \WW^\varphi} = \setdef{\varphi \in \Delta_1^m}{\widetilde{f}^\xi \in \WW^\varphi}.
	\end{equation*}
\end{lemma}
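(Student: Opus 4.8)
The plan is to reduce the statement entirely to the edge-flow characterization of Wardrop equilibria recorded in~\eqref{eq:edge-flow-constant}: a feasible flow belongs to $\WW^\varphi$ if and only if its induced edge-flow coincides with the (unique) edge-flow shared by all $\varphi$-WE. First I would note that for every $\varphi \in \Delta_1^m$ the set $\WW^\varphi$ is nonempty: under the standing assumptions on $\CC$ and $\HH$, the map $C^\varphi$ is continuous and $\HH$ is compact and convex, so the problem $\VI(\HH,C^\varphi)$ admits a solution, which is a $\varphi$-WE. Fix any such $g^\varphi \in \WW^\varphi$.

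Next, since by hypothesis $f^\xi \in \WW^\xi$ and $\widetilde{f}^\xi \in \WW^\xi$, applying~\eqref{eq:edge-flow-constant} with the distribution $\xi$ shows that both $f^\xi$ and $\widetilde{f}^\xi$ induce the common $\xi$-WE edge-flow; hence $f^\xi_{e_k} = \widetilde{f}^\xi_{e_k}$ for every $e_k \in \EE$. Then, for an arbitrary $\varphi \in \Delta_1^m$, I would apply~\eqref{eq:edge-flow-constant} a second time, now with the distribution $\varphi$ and the reference equilibrium $g^\varphi$: it gives $f^\xi \in \WW^\varphi$ if and only if $f^\xi_{e_k} = g^\varphi_{e_k}$ for all $e_k \in \EE$, and likewise $\widetilde{f}^\xi \in \WW^\varphi$ if and only if $\widetilde{f}^\xi_{e_k} = g^\varphi_{e_k}$ for all $e_k \in \EE$. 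Because $f^\xi$ and $\widetilde{f}^\xi$ induce the same edge-flow, these two conditions are equivalent, so $f^\xi \in \WW^\varphi \iff \widetilde{f}^\xi \in \WW^\varphi$. Since $\varphi$ was arbitrary, the two sets of distributions in the statement are equal.

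I do not expect a genuine obstacle here: the only point that needs a line of justification is the nonemptiness of $\WW^\varphi$, which is what lets me invoke the "only if" direction of~\eqref{eq:edge-flow-constant} with a concrete reference equilibrium $g^\varphi$; the rest is a direct substitution into that characterization. If one prefers to avoid even this, the argument can instead be phrased purely in terms of the unique $\varphi$-WE edge-flow $f^\varphi_{e_k}$ that the excerpt already introduces, without ever naming a reference flow.
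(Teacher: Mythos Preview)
Your proposal is correct and follows precisely the approach the paper indicates: the paper does not write out a proof for this lemma but states that it ``follow[s] from the fact that a flow is a $\varphi$-WE if and only if it induces the same unique edge flow as all other $\varphi$-WE, as mentioned before in~\eqref{eq:edge-flow-constant},'' and your argument is exactly a clean expansion of that one-line justification.
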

A useful consequence of the above is that $\QQ_{\Phi}$ is independent of which $\widetilde{q}^{\zeta^u}$-WE flow is observed for each signal $\zeta^u$. 
\begin{corollary}\longthmtitle{Equal informativity of all $q^{\zeta^u}$-WE.} \label{cor:Q-independent-of-specific-flow}
	Let $\PP$, $\Theta$, $\CC$, a signalling scheme ${\Phi \in \colsto{z}{m}}$ and a signal $\zeta^u$ be given. For any ${\widetilde{f}^{\zeta^u}, \widehat{f}^{\zeta^u} \in \WW^{\zeta^u}}$ the set of all priors $\varphi \in \Delta_1^m$ satisfying \eqref{eq:posterior-WE-constraint} is the same.
\end{corollary}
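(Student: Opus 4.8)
The plan is to recognize that the constraints~\eqref{eq:posterior-WE-constraint} associated with an \emph{observed} flow are nothing but the statement that this flow is a Wardrop equilibrium for the Bayesian posterior of the candidate prior, and then to invoke Lemma~\ref{lem:equality-of-inducing-sets}. Concretely, fix $\Phi$, the signal $\zeta^u$, and write $D_u(\varphi) := \sum_{\ell \in [m]} \phi^u_\ell \varphi_\ell$ for $\varphi \in \Delta_1^m$. The first step is to characterize, for an arbitrary flow $f \in \WW^{\zeta^u} \subseteq \HH$, the set of priors $\varphi \in \Delta_1^m$ satisfying~\eqref{eq:posterior-WE-constraint} with this $f$. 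If $D_u(\varphi) = 0$, then nonnegativity of $\phi^u$ and $\varphi$ forces $\phi^u_s\varphi_s = 0$ for every $s$, so every summand in~\eqref{eq:posterior-WE-constraint-1}--\eqref{eq:posterior-WE-constraint-2} vanishes and all constraints hold trivially, independently of $f$. If $D_u(\varphi) > 0$, the posterior $\widetilde q(\varphi)$ defined by~\eqref{eq:Bayes-rule} is a well-defined element of $\Delta_1^m$, and dividing each constraint in~\eqref{eq:posterior-WE-constraint} by $D_u(\varphi) > 0$ and using the identity displayed just above~\eqref{eq:posterior-WE-constraint} shows that~\eqref{eq:posterior-WE-constraint} is equivalent to the conditions~\eqref{eq:VI-cond-specific} with $q, f^q$ replaced by $\widetilde q(\varphi), f$; i.e., equivalent to $f \in \WW^{\widetilde q(\varphi)}$ (the membership $f \in \HH$ being automatic here). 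Hence the set of priors satisfying~\eqref{eq:posterior-WE-constraint} for the flow $f$ is $\{\varphi : D_u(\varphi) = 0\} \cup \{\varphi : D_u(\varphi) > 0,\ f \in \WW^{\widetilde q(\varphi)}\}$.

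Applying this description to $f = \widetilde f^{\zeta^u}$ and to $f = \widehat f^{\zeta^u}$, the component $\{\varphi : D_u(\varphi) = 0\}$ is the same in both cases, so it only remains to prove $\{\varphi : D_u(\varphi) > 0,\ \widetilde f^{\zeta^u} \in \WW^{\widetilde q(\varphi)}\} = \{\varphi : D_u(\varphi) > 0,\ \widehat f^{\zeta^u} \in \WW^{\widetilde q(\varphi)}\}$. By hypothesis $\widetilde f^{\zeta^u}$ and $\widehat f^{\zeta^u}$ both belong to $\WW^{\zeta^u}$, meaning both are Wardrop equilibria for the posterior $\widetilde q^{\zeta^u} \in \Delta_1^m$ attached to the signal $\zeta^u$. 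Thus Lemma~\ref{lem:equality-of-inducing-sets} applies with $\xi = \widetilde q^{\zeta^u}$ and yields $\{\psi \in \Delta_1^m : \widetilde f^{\zeta^u} \in \WW^\psi\} = \{\psi \in \Delta_1^m : \widehat f^{\zeta^u} \in \WW^\psi\}$. Since for each $\varphi$ with $D_u(\varphi) > 0$ the membership $\widetilde f^{\zeta^u} \in \WW^{\widetilde q(\varphi)}$ is exactly the membership of $\widetilde q(\varphi)$ in the left-hand set above (and likewise for $\widehat f^{\zeta^u}$), this equality of sets immediately gives the required equality, and the corollary follows.

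The step I expect to be delicate is the equivalence in the first paragraph between~\eqref{eq:posterior-WE-constraint} for a given $f$ and the condition $f \in \WW^{\widetilde q(\varphi)}$. Beyond the routine termwise clearing of the positive denominator $D_u(\varphi)$, one must check that the support-dependent ranges of the quantifiers in~\eqref{eq:posterior-WE-constraint} --- ``$f_p, f_r > 0$'' for the equalities and ``$f_p > 0,\ f_r = 0$'' for the inequalities --- coincide with precisely those appearing in the definition~\eqref{eq:VI-cond-specific} of a $\widetilde q(\varphi)$-WE for the \emph{same} flow $f$; this is what makes the reformulation an equivalence rather than merely a one-sided implication, and it is the hinge of the whole argument. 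The degenerate set $\{\varphi : D_u(\varphi) = 0\}$ is handled separately as above (or may simply be discarded, since such priors are excluded by the standing assumption that the denominator in~\eqref{eq:Bayes-rule} is positive), and plays no role other than being manifestly independent of the observed flow.
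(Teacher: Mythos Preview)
Your proof is correct. Both your argument and the paper's hinge on Lemma~\ref{lem:equality-of-inducing-sets}, but you reach it by a slightly different route: you keep the original game and observe that~\eqref{eq:posterior-WE-constraint} for a flow $f$ is, after dividing by $D_u(\varphi)>0$, exactly the statement $f \in \WW^{\widetilde q(\varphi)}$ in the \emph{original} game, and then apply the lemma with $\xi = \widetilde q^{\zeta^u}$. The paper instead absorbs the factors $\phi^u_s$ into the cost functions, passing to a modified game with costs $\phi^u_s C_p^{\theta_s}(\cdot)$; in that game~\eqref{eq:posterior-WE-constraint} is literally the $\varphi$-WE condition for $f$, and both $\widetilde f^{\zeta^u}$ and $\widehat f^{\zeta^u}$ are $q$-WE, so Lemma~\ref{lem:equality-of-inducing-sets} applies with $\xi = q$. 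The paper's version is a one-line reduction; yours is more explicit, makes the role of the posterior map transparent, and handles the degenerate set $\{D_u(\varphi)=0\}$ carefully (which the paper's formulation sidesteps automatically, since in the modified game those $\varphi$ make all weighted costs vanish and every flow is trivially a $\varphi$-WE). The two arguments are dual to each other: you push the weights $\phi^u_s$ onto the distribution, the paper pushes them onto the costs.
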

\begin{proof}
	The results follows by applying Lemma \ref{lem:equality-of-inducing-sets} to the routing game where the cost functions $C_p^{\theta_{s}}(\cdot)$ are replaced with $\phi^u_s C_p^{\theta_{s}}(\cdot)$.
\end{proof}
Our first result considers the two-state case, and shows that there exists a set of distributions which induce flows in $\FF_{\info}$.
\begin{lemma}\longthmtitle{Distributions leading to $\FF_\info$} \label{lem:useful-WE-interval}
	Let $\PP$, $\Theta$, and $\CC$ be given, where $\Theta = \{\theta_1,\theta_2\}$ and ${\WW^{\theta_{1}} \neq \WW^{\theta_{2}}}$. Let $\FF_{\info}$ be as given in~\eqref{eq:F-inf}. There exist distributions $\xi \neq \eta$ with $\xi_1 > \eta_1$ such that for any ${\varphi_\mu := \mu \varphi^{\theta_1} + (1 - \mu)\xi}$, ${\mu \in [0,1]}$ we have $\WW^{\varphi_{\mu}} = \WW^{\theta_{1}}$, and for any ${\varphi_\lambda := \lambda \xi + (1 - \lambda)\eta}$, $\lambda \in (0,1)$ there exists $f^{\varphi_{\lambda}} \in \WW^{\varphi_\lambda}$ such that
	\begin{align}\label{eq:finf-inclusion}
		f^{\varphi_{\lambda}} \in \FF_{\info}.
	\end{align}
\end{lemma}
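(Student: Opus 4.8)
The plan is to reduce the claim to a one‑parameter analysis. Since $m=2$, a distribution $\varphi\in\Delta_1^2$ is determined by its first component, so write $\varphi(t):=(t,1-t)$ for $t\in[0,1]$, noting $\varphi(1)=\varphi^{\theta_{1}}$, $\varphi(0)=\varphi^{\theta_{2}}$, and that $t\mapsto\varphi(t)$ is affine and continuous. First I would study
\[
I:=\setdef{t\in[0,1]}{\WW^{\varphi(t)}=\WW^{\theta_{1}}},\qquad J:=\setdef{t\in[0,1]}{\WW^{\varphi(t)}=\WW^{\theta_{2}}}.
\]
Fix some $f^{\theta_{1}}\in\WW^{\theta_{1}}$. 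By Lemma~\ref{lem:equality-and-intersection}, $f^{\theta_{1}}\in\WW^{\varphi(t)}$ iff $\WW^{\varphi(t)}=\WW^{\theta_{1}}$, so $I$ is the preimage under $t\mapsto\varphi(t)$ of $\setdef{\varphi\in\Delta_1^2}{f^{\theta_{1}}\in\WW^\varphi}$, which is closed and convex by Lemma~\ref{lem:interval-of-constant-WE}. Hence $I$ is a closed subinterval of $[0,1]$; since $1\in I$, $I=[\xi_1,1]$, and analogously $J=[0,\zeta_1]$. As $\WW^{\theta_{1}}\neq\WW^{\theta_{2}}$, no $t$ can lie in both, so $I\cap J=\emptyset$ and thus $0\le\zeta_1<\xi_1\le1$.

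Set $\xi:=\varphi(\xi_1)=(\xi_1,1-\xi_1)$; since $I$ is closed, $\WW^{\xi}=\WW^{\theta_{1}}$. For $\mu\in[0,1]$ the first component of $\varphi_\mu=\mu\varphi^{\theta_{1}}+(1-\mu)\xi$ is $\xi_1+\mu(1-\xi_1)\in[\xi_1,1]=I$, so $\varphi_\mu=\varphi(\xi_1+\mu(1-\xi_1))$ satisfies $\WW^{\varphi_\mu}=\WW^{\theta_{1}}$, which is the first assertion (vacuous when $\xi_1=1$, i.e.\ $\xi=\varphi^{\theta_{1}}$).

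For the second assertion I would use continuity of edge‑flows. By Lemma~\ref{lem:used-set-properties}(2), every edge lying on a path of $\RR^{\mathrm{use}}_{\varphi^{\theta_{1}}}$ carries positive flow under the common $\theta_{1}$‑WE edge‑flow, and since $\WW^{\xi}=\WW^{\theta_{1}}$ the same is true under the common $\xi$‑WE edge‑flow (cf.~\eqref{eq:edge-flow-constant}). There are finitely many such edges, so Lemma~\ref{lem:continuity-edge-flow}, composed with $t\mapsto\varphi(t)$, yields $\rho>0$ such that for all $t$ with $\abs{t-\xi_1}<\rho$ all of them still carry positive flow; by Lemma~\ref{lem:used-set-properties}(2) this gives $\RR^{\mathrm{use}}_{\varphi^{\theta_{1}}}\subseteq\RR^{\mathrm{use}}_{\varphi(t)}$, and then Lemma~\ref{lem:used-set-properties}(1) furnishes $f^{\varphi(t)}\in\WW^{\varphi(t)}$ with $f^{\varphi(t)}_p>0$ for every $p\in\RR^{\mathrm{use}}_{\varphi^{\theta_{1}}}$. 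Now pick $\eta_1\in\big(\max\{\zeta_1,\xi_1-\rho\},\,\xi_1\big)$, which is nonempty since $\zeta_1<\xi_1$ and $\xi_1-\rho<\xi_1$, and set $\eta:=(\eta_1,1-\eta_1)$; then $\xi\neq\eta$ and $\xi_1>\eta_1$. For $\lambda\in(0,1)$, $\varphi_\lambda=\varphi(t_\lambda)$ with $t_\lambda:=\lambda\xi_1+(1-\lambda)\eta_1\in(\eta_1,\xi_1)\subseteq(\zeta_1,\xi_1)\cap(\xi_1-\rho,\xi_1)$. From $t_\lambda\in(\xi_1-\rho,\xi_1)$ we get an $f^{\varphi_\lambda}\in\WW^{\varphi_\lambda}$ positive on $\RR^{\mathrm{use}}_{\varphi^{\theta_{1}}}$; from $t_\lambda\notin I\cup J$ we get $\WW^{\varphi_\lambda}\neq\WW^{\theta_{1}}$ and $\WW^{\varphi_\lambda}\neq\WW^{\theta_{2}}$, so by Lemma~\ref{lem:equality-and-intersection} $\WW^{\varphi_\lambda}\cap(\WW^{\theta_{1}}\cup\WW^{\theta_{2}})=\emptyset$ and hence $f^{\varphi_\lambda}\notin\WW^{\theta_{1}}\cup\WW^{\theta_{2}}$. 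Both defining properties of $\FF_{\info}$ in~\eqref{eq:F-inf} now hold, so $f^{\varphi_\lambda}\in\FF_{\info}$.

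The step I expect to be most delicate is the passage from ``all edges of a path carry positive flow'' to ``some Wardrop equilibrium routes positive flow on that path'': this is where path‑flow non‑uniqueness matters, and it has to be handled via Lemma~\ref{lem:used-set-properties} together with the edge‑flow continuity of Lemma~\ref{lem:continuity-edge-flow}, rather than directly at the level of path‑flows. The remaining work is bookkeeping with the intervals $I$ and $J$, including the harmless degenerate case $\xi_1=1$.
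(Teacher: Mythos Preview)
Your proof is correct and follows essentially the same route as the paper: both identify $\xi$ as the left endpoint of the closed interval where $\WW^{\varphi}=\WW^{\theta_1}$ via Lemmas~\ref{lem:interval-of-constant-WE} and~\ref{lem:equality-and-intersection}, then use edge-flow continuity (Lemma~\ref{lem:continuity-edge-flow}) together with Lemma~\ref{lem:used-set-properties} to secure, for $t$ just below $\xi_1$, a WE positive on $\RR^{\mathrm{use}}_{\varphi^{\theta_1}}$. The one minor streamlining is your handling of the condition $f^{\varphi_\lambda}\notin\WW^{\theta_2}$: you introduce the companion interval $J=[0,\zeta_1]$ and simply pick $\eta_1>\zeta_1$, whereas the paper invokes a second continuity argument (comparing $f^{\theta_1}_{\mathrm{edge}}$ and $f^{\theta_2}_{\mathrm{edge}}$) to bound $\eta$ away from $\WW^{\theta_2}$; your variant is a bit cleaner but not substantively different.
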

\begin{proof}
	First we aim to find the distribution ${\xi := (\xi_1,1 - \xi_1)}$. Pick any $f^{\theta_{1}} \in \WW^{\theta_{1}}$. From Lemma~\ref{lem:interval-of-constant-WE}, the set of distributions $\varphi \in \Delta_1^2$ with $f^{\theta_1} \in \WW^\varphi$ is convex and compact. That is, there exist a $c \in [0,1]$ such that ${f^{\theta_{1}} \notin \WW^\varphi}$ for all $\varphi \in \Delta_1^2$ with $\varphi_1 < c$ and $f^{\theta_{1}} \in \WW^\varphi$ for all $\varphi \in \Delta_1^2$ with $\varphi_1 \geq c$. In addition, by Lemma~\ref{lem:equality-and-intersection}, $f^{\theta_{1}} \in \WW^\varphi$ for some $\varphi$ if and only if $\WW^\varphi = \WW^{\theta_{1}}$. Combining these two facts and setting $\xi := (c,1-c)$ yields that: (a) $\WW^{\varphi_\mu} = \WW^{\theta_1}$ for all $\varphi_\mu = \mu \varphi^{\theta_1} + (1 - \mu)\xi$ and $\mu \in [0,1]$; and (b) for all $\varphi \in \Delta_1^2$ with $\varphi_1 < \xi_1$ and all $f^\varphi \in \WW^\varphi$, we have $f^\varphi \notin \WW^{\theta_1}$. The latter item (b) shows that $\xi_1 > 0$ which is essential for a $\eta$ distribution with $\eta_1 < \xi_1$ to exist. To see the reasoning for $\xi_1 > 0$, note that $\WW^{\theta_1} \not = \WW^{\theta_2}$ and by Lemma~\ref{lem:equality-and-intersection}, $\WW^{\theta_1} \cap \WW^{\theta_2} = \emptyset$. This statement will contradict if $\xi_1 = 0$ as then, $\WW^{\varphi_\mu} = \WW^{\theta_2}$ for $\mu=0$. The next step is to find the distribution $\eta := (\eta_1, 1-\eta_1)$. Let $f^\xi_{\mathrm{edge}}$ be the edge-flows associated to any $\xi$-WE. Pick any $p \in \RR^{\mathrm{use}}_{\eta}$ and by the second implication of Lemma~\ref{lem:used-set-properties}, $(f^\xi_{\mathrm{edge}})_k > 0$ for all $e_k \in p$. By continuity property of Lemma~\ref{lem:continuity-edge-flow}, there exist $\delta_\xi > 0$ such that $(f^\eta_{\mathrm{edge}})_k > 0$ for edge-flows associated to any $\eta$-WE where $\norm{\xi - \eta} < \delta_\xi$. This along with the second implication of Lemma~\ref{lem:used-set-properties} implies that $ \RR^{\mathrm{use}}_{\xi} \subseteq \RR^{\mathrm{use}}_{\eta}$ for all $\eta$ satisfying $\norm{\xi - \eta} < \delta_\xi$. This along with $\WW^\xi = \WW^{\theta_{1}}$ gives us
	\begin{align*}
		{\RR^{\mathrm{use}}_{\varphi^{\theta_{1}}} \subseteq \RR^{\mathrm{use}}_{\eta}},
	\end{align*}
	for all $\eta$ satisfying $\norm{\xi - \eta} < \delta_\xi$. From the first claim of Lemma~\ref{lem:used-set-properties}, there exists $f^{\eta} \in \WW^{\eta}$ such that $f_p^\eta > 0$ for all $p \in \RR^{\mathrm{use}}_{\varphi^{\theta_{1}}}$. Further, restricting our attention to $\eta$ with $\eta_1 < \xi_1$, we also know that $f^\eta \notin \WW^{\theta_{1}}$. In order to establish~\eqref{eq:finf-inclusion}, we now show that $f^\eta \in \FF_\info$. To this end, given above properties of $f^\eta$, all that remains to be shown is that setting $\delta_\xi$ small enough ensures $f^\eta \notin \WW^{\theta_2}$. For this, note that since $\WW^{\theta_{1}} \cap \WW^{\theta_{2}} = \emptyset$, we have $f_{\mathrm{edge}}^{\theta_{1}} \neq f_{\mathrm{edge}}^{\theta_{2}}$. Consequently, by Lemma~\ref{lem:continuity-edge-flow} and $\WW^\xi = \WW^{\theta_1}$, it follows that there exists $\delta_0 > 0$ such that $\norm{\xi - \eta} < \delta_0$ gives $f^{\eta} \notin \WW^{\theta_{2}}$. Thus, setting $\delta_\xi < \delta_0$ implies that for any $\eta \in \Delta_1^2$ with $\norm{\xi - \eta} < \delta_\xi$ and $\eta_1 < \xi_1$ there exists $f^{\eta} \in \WW^{\eta}$ such that $f^{\eta} \in \FF_{\info}$. Fixing $\eta_1$ as the infimum over all values for which $\norm{\xi - \eta} < \delta_\xi$ holds finishes the proof.
\end{proof}
Figure \ref{fig:4-road-2-state} can help us gain some intuition about the implications of Lemma \ref{lem:useful-WE-interval}. Under the given assumptions, the result divides the set $\Delta_1^2$ of all distributions into three convex regions. The first region is compact, and for any distribution inside of it the induced flows are contained in $\WW^{\theta_{1}}$. In Figure \ref{fig:4-road-2-state} we see that this region is the singleton set $\{(0,1)\}$. The second region is a convex and open set of distributions bordering the first region, for which the induced flows are in $\FF_\info$. In Figure \ref{fig:4-road-2-state} this would be all distributions between $\varphi_1 = 1$ and the first point where the flow on path 1 becomes zero. Note that any flow in this region is induced by a unique distriubion. The third region then contains all other distributions. Note that in this third region there are still flows that are uniquely associated to only one distribution. The next result shows that if for a given distribution $\varphi$ there exists a $\varphi$-WE $f^\varphi$ such that $f^\varphi \in \FF_{\info}$, then the constraints \eqref{eq:VI-cond-specific2} for \emph{any} $\varphi$-WE uniquely determine $\varphi$.
\begin{lemma}\longthmtitle{Informativity of flows in $\FF_\info$} \label{lem:Finf-identifies-distribution}
	Let $\PP$, ${\Theta = \{\theta_{1},\theta_{2}\}}$, $\CC$, and $\varphi \in \Delta_1^2$ be given. If there exists $f^\varphi \in \WW^\varphi$ satisfying $f^\varphi \in \FF_{\info}$, then $\varphi$ is the unique solution to \eqref{eq:VI-cond-specific2} for any $\varphi$-WE $\widehat{f}^\varphi \in \WW^\varphi$.\footnote{Here we replace $f^q$ in~\eqref{eq:VI-cond-specific2} with $\widehat{f}^\varphi$ and treat $q$ as a variable that can be solved for.}
\end{lemma}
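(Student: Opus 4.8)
The plan is to reduce the statement to a claim about a single interval of priors and then collapse that interval to a point using the defining features of $\FF_\info$.

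\emph{Reduction.} Fix an arbitrary $\widehat{f}^\varphi\in\WW^\varphi$. By construction, system~\eqref{eq:VI-cond-specific2} with $f^q$ replaced by $\widehat f^\varphi$ is exactly the set of conditions on $q$ under which $\widehat f^\varphi$ is a $q$-WE, so its solution set inside $\Delta_1^2$ equals $Q(\widehat f^\varphi):=\setdef{q\in\Delta_1^2}{\widehat f^\varphi\in\WW^q}$. Since $f^\varphi,\widehat f^\varphi\in\WW^\varphi$, Lemma~\ref{lem:equality-of-inducing-sets} (equivalently Corollary~\ref{cor:Q-independent-of-specific-flow}) gives $Q(\widehat f^\varphi)=Q(f^\varphi)=:Q$, independent of the chosen equilibrium; clearly $\varphi\in Q$. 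Thus it suffices to prove $Q=\{\varphi\}$. Next, if $q\in Q$ then $f^\varphi\in\WW^q\cap\WW^\varphi$, so Lemma~\ref{lem:equality-and-intersection} forces $\WW^q=\WW^\varphi$ (the converse is trivial), hence $Q=\setdef{q\in\Delta_1^2}{\WW^q=\WW^\varphi}$, which is compact and convex by Lemma~\ref{lem:interval-of-constant-WE}. Identifying $\Delta_1^2$ with $[0,1]$ through $q\mapsto q_1$, $Q$ is a closed interval $[a,b]\ni\varphi_1$; and since $f^\varphi\in\FF_\info$ implies $f^\varphi\notin\WW^{\theta_1}\cup\WW^{\theta_2}$, neither $\varphi^{\theta_1}=(1,0)$ nor $\varphi^{\theta_2}=(0,1)$ lies in $Q$, so $0<a\le b<1$.

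\emph{Collapsing the interval --- the main step.} I will show $a=b$; suppose not. Since $\WW^q=\WW^\varphi$ for every $q$ with $q_1\in[a,b]$, the fixed flow $f^\varphi$ is a $q$-WE for all these $q$, so for every pair $p,r$ with $f^\varphi_p,f^\varphi_r>0$ the affine map $q_1\mapsto q_1\big(C^{\theta_1}_p(f^\varphi)-C^{\theta_1}_r(f^\varphi)\big)+(1-q_1)\big(C^{\theta_2}_p(f^\varphi)-C^{\theta_2}_r(f^\varphi)\big)$ vanishes on $[a,b]$, hence identically; combined with $f^\varphi\in\WW^\varphi$ and $q_1+q_2=1$ this forces $C^{\theta_1}_p(f^\varphi)=C^{\theta_1}_r(f^\varphi)$ and $C^{\theta_2}_p(f^\varphi)=C^{\theta_2}_r(f^\varphi)$ for all paths used by $f^\varphi$ --- every used path has the same per-state cost vector at $f^\varphi$. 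In the two-path case (more generally, whenever $f^\varphi$ carries positive flow on every path) this already yields $f^\varphi\in\WW^{\theta_1}$, contradicting $f^\varphi\in\FF_\info$. In general I would close the argument by continuity: by the optimization characterization in the proof of Lemma~\ref{lem:continuity-edge-flow}, $f^\varphi_{\mathrm{edge}}$ minimizes $t\,F^{\theta_1}(\cdot)+(1-t)\,F^{\theta_2}(\cdot)$ over $\HH_e$ for every $t\in[a,b]$, with $F^{\theta_s}$ the strictly convex Beckmann potential of state $\theta_s$; since $t\in(0,1)$ there, $f^\varphi_{\mathrm{edge}}$ is Pareto-efficient for the pair $(F^{\theta_1},F^{\theta_2})$ and is distinct from $f^{\theta_1}_{\mathrm{edge}}$ and $f^{\theta_2}_{\mathrm{edge}}$ (using $f^\varphi\notin\WW^{\theta_1}\cup\WW^{\theta_2}$ and~\eqref{eq:edge-flow-constant}). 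Meanwhile $f^\varphi\in\FF_\info$ gives, via Lemma~\ref{lem:used-set-properties}, that $f^\varphi_{\mathrm{edge}}$ is strictly positive on every edge of every path in $\RR^{\mathrm{use}}_{\varphi^{\theta_1}}$, i.e.\ $\RR^{\mathrm{use}}_{\varphi^{\theta_1}}\subseteq\RR^{\mathrm{use}}_\varphi$. Perturbing $f^\varphi_{\mathrm{edge}}$ along an admissible rerouting direction supported on $\RR^{\mathrm{use}}_{\varphi^{\theta_1}}$ (admissible precisely because those edges carry positive flow) and combining the first-order (normal-cone) optimality at the two endpoints $t=a$ and $t=b$ with strict monotonicity of the edge costs should then contradict the ``same cost vector on all used paths'' conclusion above. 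The case $\WW^{\theta_1}=\WW^{\theta_2}$ is subsumed: then $f^{\theta_1}_{\mathrm{edge}}=f^{\theta_2}_{\mathrm{edge}}$ minimizes every $t\,F^{\theta_1}+(1-t)\,F^{\theta_2}$, so by uniqueness $f^\varphi_{\mathrm{edge}}=f^{\theta_1}_{\mathrm{edge}}$, a contradiction.

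\emph{Where the difficulty lies.} The two reductions are routine applications of the cited lemmas. The real work is the last step: making rigorous why a flow that remains a Wardrop equilibrium over an entire nondegenerate sub-segment of $\Delta_1^2$ cannot simultaneously satisfy the support condition defining $\FF_\info$. The subtlety is the presence of paths carrying no flow in $f^\varphi$ --- these are exactly what blocks the naive implication ``equal state-$\theta_1$ costs on all used paths $\Rightarrow f^\varphi\in\WW^{\theta_1}$'' --- so I expect the clean route to work with the parametric Beckmann program, its normal cone at $f^\varphi_{\mathrm{edge}}$, the continuity of $q\mapsto f^q_{\mathrm{edge}}$, and rerouting perturbations confined to $\RR^{\mathrm{use}}_{\varphi^{\theta_1}}$, rather than with the path-cost conditions~\eqref{eq:VI-cond-specific2} alone.
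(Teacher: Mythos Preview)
Your reduction is correct and cleaner than the paper's setup: identifying the solution set of~\eqref{eq:VI-cond-specific2} with $Q=\setdef{q\in\Delta_1^2}{\WW^q=\WW^\varphi}$ via Lemmas~\ref{lem:equality-and-intersection} and~\ref{lem:equality-of-inducing-sets}, and then recognising $Q$ as a closed subinterval $[a,b]\subset(0,1)$ by Lemma~\ref{lem:interval-of-constant-WE} together with $f^\varphi\notin\WW^{\theta_1}\cup\WW^{\theta_2}$, is exactly right. Your key deduction --- that if $a<b$ then every pair of \emph{used} paths has identical per-state costs at $f^\varphi$ --- is also correct and is essentially what the paper needs as well.

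The gap is in your ``collapsing the interval'' step. You correctly isolate the obstruction: from ``all used paths share the same $\theta_1$-cost'' one cannot conclude $f^\varphi\in\WW^{\theta_1}$, because some \emph{unused} path could be strictly cheaper in state $\theta_1$. Your proposed fix via Pareto efficiency of $f^\varphi_{\mathrm{edge}}$ for the pair of Beckmann potentials and a normal-cone/rerouting perturbation is only a sketch (``should then contradict''), and as written it does not yield a contradiction: knowing that $f^\varphi_{\mathrm{edge}}$ minimises $tF^{\theta_1}+(1-t)F^{\theta_2}$ for two values of $t$ does not, by itself, force it to minimise $F^{\theta_1}$, even after restricting attention to directions supported on $\RR^{\mathrm{use}}_{\varphi^{\theta_1}}$. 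The first-order conditions at $t=a$ and $t=b$ give two elements of the same normal cone, but the cone is not a subspace, so you cannot subtract them to isolate $\nabla F^{\theta_1}$.

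The paper closes this exact gap by a short combinatorial argument rather than convex analysis. Take $\widehat f^\varphi\in\WW^\varphi$ positive on all of $\RR^{\mathrm{use}}_\varphi\cup\RR^{\mathrm{use}}_{\varphi^{\theta_1}}$ (a convex combination of your $f^\varphi$ with a WE positive on all of $\RR^{\mathrm{use}}_\varphi$, cf.\ Lemma~\ref{lem:used-set-properties}). Form the subgraph $\widecheck\GG$ consisting only of edges with strictly positive $\widehat f^\varphi$-flow. Any unused path $r$ that is strictly $\theta_1$-cheaper than the used paths must, by Lemma~\ref{lem:used-set-properties}, contain a zero-flow edge and hence disappear in $\widecheck\GG$; so on $\widecheck\GG$ the flow $\widehat f^\varphi$ \emph{is} a $\varphi^{\theta_1}$-WE. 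But every path in $\RR^{\mathrm{use}}_{\varphi^{\theta_1}}$ survives in $\widecheck\GG$ (this is precisely where the support condition in $\FF_\info$ is used), so $f^{\theta_1}$ also defines a $\varphi^{\theta_1}$-WE on $\widecheck\GG$, with a different edge-flow since $\widehat f^\varphi\notin\WW^{\theta_1}$. Two $\varphi^{\theta_1}$-WE with distinct edge-flows contradict uniqueness. This device --- pass to the positive-flow subgraph to kill the troublesome unused paths --- is the missing idea in your proposal, and it replaces the unfinished normal-cone argument entirely.
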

\begin{proof}
	Let $f^{\varphi,1} \in \WW^\varphi$ be a flow such that $f^{\varphi,1}_p > 0$ for all $p \in  \RR^{\mathrm{use}}_\varphi$, which exists by Lemma~\ref{lem:used-set-properties}. By assumption, there exists a WE $f^{\varphi,2} \in \WW^{\varphi}$ satisfying $f^{\varphi,2} \in \FF_{\info}$. Let $\widehat{f}^\varphi \in \WW^\varphi$ be defined as $\widehat{f}^\varphi:=\lambda_1 f^{\varphi,1}  + \lambda_2 f^{\varphi,2}$ for some $\lambda_1, \lambda_2 > 0$ with $\lambda_1 + \lambda_2 = 1$. Note that $\widehat{f}_p^\varphi > 0$ for all $p \in \RR^{\mathrm{use}}_\varphi \cup \RR^{\mathrm{use}}_{\theta_1}$ and one can select $\lambda_1$ and $\lambda_2$ additionally to ensure $\widehat{f}^\varphi \in \FF_{\info}$. Picking such constants and noting the definition of $\FF_{\info}$, we have ${\widehat{f}^\varphi \notin \WW^{\theta_{1}}}$, meaning that $\widehat{f}^{\varphi}$ is not a $\varphi^{\theta_{1}}$-WE. We will next show that $\varphi$ is the unique solution to \eqref{eq:VI-cond-specific2} where $f^q$ is replaced with $\widehat{f}^\varphi$ and $q$ is treated as a variable to be solved for. Note that since $\widehat{f}^{\varphi}$ is not a $\varphi^{\theta_{1}}$-WE, there exist paths $p,r \in \PP$ such that $\widehat{f}^\varphi_p > 0$ and
	\begin{equation} \label{eq:useful-constraint-1}
		C^{\theta_{1}}_p(\widehat{f}^\varphi) > C^{\theta_{1}}_r(\widehat{f}^\varphi).
	\end{equation}
	Consider two cases: (a) $\widehat{f}^\varphi_r > 0$ and (b) $\widehat{f}^\varphi_r = 0$. For case (a), from~\eqref{eq:VI-cond-specific2}, we obtain an equality constraint of the form
	\begin{align*}
		\left(\begin{array}{cc} C_p^{\theta_1}(\widehat{f}^\varphi) - C_r^{\theta_1}(\widehat{f}^\varphi)& C_p^{\theta_2}(\widehat{f}^\varphi) - C_r^{\theta_2}(\widehat{f}^\varphi)
		\end{array}\right) \varphi = 0.
	\end{align*}
	Since $C^{\theta_{1}}_p(\widehat{f}^\varphi) \neq C^{\theta_{1}}_r(\widehat{f}^\varphi)$ this constraint along with ${\varphi_1 + \varphi_2 = 1}$ gives us two linearly independent equality constraints on $\varphi$. Since $\varphi \in \real^2$ this implies that $\varphi$ is the only distribution that satisfies the constraints in~\eqref{eq:VI-cond-specific2}. We next show that case (b), with $\widehat{f}^\varphi_r = 0$, does not occur. To be precise, we claim that for $\widehat{f}^\varphi \notin \WW^{\theta_1}$, there exists at least one pair of paths $p,r \in \PP$ satisfying~\eqref{eq:useful-constraint-1} where both $\widehat{f}_p > 0$ and $\widehat{f}_r > 0$. To show this, we proceed with a contradiction argument. Assume there does not exist such a pair of paths. This implies two things: 1) $C^{\theta_{1}}_p(\widehat{f}^\varphi) = C^{\theta_{1}}_r(\widehat{f}^\varphi)$ for all $p,r \in \PP$ such that $\widehat{f}_p^\varphi > 0$ and $\widehat{f}_r^\varphi > 0$; 2) if $C^{\theta_{1}}_p(\widehat{f}^\varphi) > C^{\theta_{1}}_r(\widehat{f}^\varphi)$ for some $p$ such that $\widehat{f}_p^\varphi > 0$ this implies $\widehat{f}_r^\varphi = 0$. Now consider the graph $\widecheck{\GG} = (\widecheck{\VV},\widecheck{\EE})$ with $\widecheck{\VV} = \VV$ and $\widecheck{\EE} \subseteq \EE$ such that $e_k \in \widecheck{\EE}$ if and only if $(\widehat{f}^\varphi_{\mathrm{edge}})_k > 0$, where $\widehat{f}^\varphi_{\mathrm{edge}}$ is the vector of edge-flows. To all edges in $\widecheck{\EE}$ associate the same state-dependent cost functions as in the original network, and consider the same set of states $\Theta$. This defines a new routing game over the network $\widecheck{\GG}$. Note that for any $p \in \PP$ such that $\widehat{f}^\varphi_p > 0$, we have by Lemma~\ref{lem:used-set-properties} that $(\widehat{f}^\varphi_{\mathrm{edge}})_k > 0$ for all $e_k \in p$ Thus, when $\widehat{f}^\varphi_p > 0$ and $e_k \in p$, then $e_k \in \widecheck{\EE}$. Therefore, $\widehat{f}_p^\varphi > 0$ implies $p \in \widecheck{\PP}$. Thus, we can define a feasible flow for the modified game by setting $\widecheck{f}^\varphi_p := \widehat{f}^\varphi_p$ for all $p$ such that $\widehat{f}^\varphi_p > 0$.  Since cost functions over the used edges have not changed, if for two paths $p$ and $r$ we have $C^{\theta_{1}}_p(\widehat{f}^\varphi) = C^{\theta_{1}}_r(\widehat{f}^\varphi)$, then $\widecheck{C}^{\theta_{1}}_p(\widecheck{f}^\varphi) = \widecheck{C}^{\theta_{1}}_r(\widecheck{f}^\varphi)$. Now recall that if there was a path $r$ in the original game such that $C^{\theta_{1}}_p(\widehat{f}^\varphi) > C^{\theta_{1}}_r(\widehat{f}^\varphi)$ for some $p$ satisfying $\widehat{f}^\varphi_p > 0$, then by 2) we have $\widehat{f}^\varphi_r = 0$. Since $\widehat{f}^\varphi_p > 0$ for all $p \in \RR^{\mathrm{use}}_\varphi$ by construction, we deduce that $f_r = 0$ for all $f \in \WW^\varphi$. Using the second implication of Lemma~\ref{lem:used-set-properties} we see that there exists some edge $e_k \in r$ such that $(\widehat{f}^\varphi_{\mathrm{edge}})_k = 0$. Therefore, the edge $e_k$ has been removed in the modified game, and it follows that the path $r$ is not present in the modified game. In conclusion, we have  $\widecheck{C}^{\theta_{1}}_p(\widecheck{f}^\varphi) = \widecheck{C}^{\theta_{1}}_r(\widecheck{f}^\varphi)$ for all $p,r \in \widecheck{\PP}$ which implies that $\widecheck{f}^\varphi$ is a $\varphi^{\theta_{1}}$-WE for the modified game. Now consider any flow $f^{\theta_{1}} \in \WW^{\theta_{1}}$. If $f^{\theta_{1}}_p > 0$, then $p \in \RR^{\mathrm{use}}_{\varphi^{\theta_{1}}}$, which implies $\widehat{f}^\varphi_p > 0$. Repeating the above arguments then shows that $p \in \widecheck{\PP}$. Thus, we can define a feasible flow for the modified game by setting $\widecheck{f}^{\theta_{1}}_p = f^{\theta_{1}}_p$. Similar to before we have that since $f^{\theta_{1}}$ is a $\varphi^{\theta_{1}}$-WE of the original game this implies that $\widecheck{f}^{\theta_{1}}$ is a $\varphi^{\theta_{1}}$-WE for the modified game. However, since $\widehat{f}^\varphi \in \FF_{\info}$, we have $\widehat{f}^\varphi \notin \WW^{\theta_{1}}$ which implies that $\widehat{f}^\varphi_{\mathrm{edge}} \neq \widecheck{f}^{\theta_{1}}_{\mathrm{edge}}$, whereas $\widehat{f}^\varphi_{\mathrm{edge}} = \widecheck{f}^\varphi_{\mathrm{edge}}$. This means we obtain two $\varphi^{\theta_{1}}$-WE, namely $\widecheck{f}^\varphi$ and $\widecheck{f}^{\theta_1}$ for the modified game with unequal edge-flows. This contradicts the uniqueness of edge-flow under $\varphi^{\theta_{1}}$-WE. Thus we arrive at a contradiction. Therefore there do exist $p,r \in \PP$ such that $\widehat{f}_p^\varphi > 0$, $\widehat{f}_r^\varphi > 0$, and~\eqref{eq:useful-constraint-1} holds. Therefore, $\varphi$ is uniquely determined by the constraints in~\eqref{eq:VI-cond-specific2}. From Lemma~\ref{lem:equality-of-inducing-sets}, we have that for any $\widehat{f}^\varphi \in \WW^{\varphi}$ the set of priors satisfying the constraints imposed by \eqref{eq:VI-cond-specific2} is the same, which then concludes the proof.
\end{proof}
We next present the main result of this section. In it we make use of Lemma's \ref{lem:useful-WE-interval} and \ref{lem:Finf-identifies-distribution} to design a signalling scheme for which all but one of the signals give an equality constraint on the prior, showing that there always exists a signalling scheme using $m$ messages that is $q$-identifying.
\begin{proposition}\longthmtitle{Existence of $q$-identifying signalling scheme} \label{prop:existence-signalling-scheme}
	Let $\PP$, $\Theta$, $\CC$, and $q$ be given, and assume that ${\WW^{\theta_{1}} \neq \WW^{\theta_{2}}}$.\footnote{By relabeling the states we can see that this assumption is equivalent to assuming existence of two states $\theta_k, \theta_{\ell} \in \Theta$ such that ${\WW^{\theta_{k}} \neq \WW^{\theta_{\ell}}}$.} 
	Then, there exists a signalling scheme ${\Phi \in \colsto{m}{m}}$ of $m$ messages that is $q$-identifying.
\end{proposition}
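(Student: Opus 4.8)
The plan is to reduce the general $m$-state problem to the two-state case handled by Lemmas~\ref{lem:useful-WE-interval} and~\ref{lem:Finf-identifies-distribution}, by constructing a signalling scheme whose posteriors are supported on exactly two states at a time. Concretely, I would design $\Phi \in \colsto{m}{m}$ so that signal $\zeta^1$ reveals nothing useful (or is a ``catch-all''), while for each $u \in \{2,\dots,m\}$ the posterior $\widetilde{q}^{\zeta^u}$ assigns positive probability only to states $\theta_1$ and $\theta_u$. By Bayes' rule~\eqref{eq:Bayes-rule}, this means $\phi^u_s = 0$ for all $s \notin \{1,u\}$, and the posterior $\widetilde{q}^{\zeta^u}$ lies on the edge of the simplex joining $e_1$ and $e_u$, with $\widetilde{q}^{\zeta^u}_1 / \widetilde{q}^{\zeta^u}_u = (\phi^u_1 q_1)/(\phi^u_u q_u)$. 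Thus the free parameters $\phi^u_1, \phi^u_u$ let us place the posterior anywhere on the open segment between $e_1$ and $e_u$ (the endpoints excluded since $q$ has full support and we keep both $\phi^u_1,\phi^u_u > 0$), except that the proportionality to the unknown $q_1, q_u$ means we are really choosing a point and then must translate the resulting posterior-constraint back into a constraint on $q$.

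The key step is this: restrict attention to the two-state sub-network obtained by considering only $\Theta' = \{\theta_1, \theta_u\}$ (formally, treat any distribution supported on these two states as a point in $\Delta_1^2$). We are given $\WW^{\theta_1} \neq \WW^{\theta_2}$; more care is needed for the pair $(\theta_1,\theta_u)$ with $u \neq 2$, but if $\WW^{\theta_1} = \WW^{\theta_u}$ then $\theta_u$ is informationally indistinguishable from $\theta_1$ and one can argue the relevant constraint degenerates appropriately --- actually the cleaner route is to first get the constraint separating $\theta_1$ from $\theta_2$ using the $\{\theta_1,\theta_2\}$ pair via Lemma~\ref{lem:useful-WE-interval}, then handle remaining states inductively. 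So: apply Lemma~\ref{lem:useful-WE-interval} to the pair $\{\theta_1,\theta_u\}$ to obtain a distribution $\eta^{(u)}$ on this edge, strictly in the interior, such that some $\eta^{(u)}$-WE lies in the informative set $\FF_\info$; then by Lemma~\ref{lem:Finf-identifies-distribution} the WE constraints~\eqref{eq:VI-cond-specific2} evaluated at that flow pin down $\eta^{(u)}$ uniquely among distributions on the $\{\theta_1,\theta_u\}$-edge, i.e., they yield one nondegenerate linear equality relating $q_1$ and $q_u$ (after the Bayes substitution in~\eqref{eq:posterior-WE-constraint-1}, the posterior-constraint becomes $\phi^u_1 \beta_1 q_1 + \phi^u_u \beta_u q_u = 0$ for known coefficients $\beta_1,\beta_u$ with $\beta_1 \neq 0$). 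The main obstacle is realizability: we must choose $\phi^u_1, \phi^u_u > 0$ so that the \emph{induced} posterior $\widetilde{q}^{\zeta^u}$ --- which depends on the unknown $q$ --- equals (or at least lies in the same informative region as) the target $\eta^{(u)}$ from the lemma. Since $q$ is unknown, we cannot hit $\eta^{(u)}$ on the nose; instead I would argue that the set of posteriors that are ``informative'' (land in $\FF_\info$ for the two-state subproblem) is an open interval of the edge with nonempty interior (this is exactly the second region in the trichotomy discussed after Lemma~\ref{lem:useful-WE-interval}), and that by choosing $\phi^u_1/\phi^u_u$ appropriately --- sweeping it over $(0,\infty)$ moves the posterior over the whole open edge --- we can guarantee the posterior lands in this open informative interval regardless of where $q_1/q_u$ sits, \emph{provided} we are allowed to also use observed flows adaptively; but for a pure existence statement we may instead invoke that for \emph{some} choice of the ratio the posterior is informative, which suffices since the proposition only asserts existence.

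Assembling: for each $u \in \{2,\dots,m\}$ the signal $\zeta^u$ contributes one linearly independent equality constraint of the form $a_u q_1 + b_u q_u = 0$ with $a_u \neq 0$; together with $\sum_{s} q_s = 1$ this is a system of $m$ linear equations in $m$ unknowns. I would check linear independence: the $m-1$ constraints from $\zeta^2,\dots,\zeta^m$ each involve a distinct variable $q_u$ (for $u \geq 2$) with nonzero coefficient, so they are independent of each other, and as noted in the footnote after~\eqref{eq:VI-cond-specific2} none of them can produce $\sum_s q_s = 1$ since that constraint is not scale-invariant whereas all WE-constraints are homogeneous; hence the full $m\times m$ system has rank $m$ and a unique solution, which must be $q$ itself (as $q$ satisfies all the constraints by construction). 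Therefore $\QQ_\Phi = \{q\}$ and $\Phi$ is $q$-identifying. The remaining bookkeeping --- verifying the columns of $\Phi$ sum to one (set $\phi^1_s = 1 - \sum_{u \geq 2, u \neq s?}\dots$; more simply, let state $\theta_s$ send $\zeta^1$ with whatever probability remains, and note $\theta_1$ participates in all signals so its column just needs the weights to sum to one, which is arranged by scaling) and that every posterior denominator $\sum_\ell \phi^u_\ell q_\ell$ is positive (immediate since $q$ has full support and the relevant $\phi^u_\ell$ are positive) --- is routine. I expect the realizability/independence-from-$q$ issue in the middle paragraph to be where the author's constructive argument does its real work.
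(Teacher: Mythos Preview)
Your approach is essentially the paper's: build $\Phi$ so that each signal $\zeta^u$ (for $u\ge 2$) induces a posterior supported on exactly two states, invoke Lemmas~\ref{lem:useful-WE-interval} and~\ref{lem:Finf-identifies-distribution} on that two-state slice to extract one nondegenerate linear equality on $q$, and combine the $m-1$ equalities with $\mathbb{1}^\top q = 1$.

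There is one genuine gap. You pair every $\theta_u$ with $\theta_1$, but Lemma~\ref{lem:useful-WE-interval} requires $\WW^{\theta_1}\neq\WW^{\theta_u}$, which the hypothesis does \emph{not} guarantee for $u\ge 3$. You flag this and gesture at two fixes---``degenerates appropriately'' and ``handle remaining states inductively''---but then proceed as if always pairing with $\theta_1$. The first fix is wrong: if $\WW^{\theta_1}=\WW^{\theta_u}$ then every posterior on that edge induces the same WE set, so no equality constraint relating $q_1$ and $q_u$ is obtained at all. The second fix is the correct one and is what the paper does: for each $s\ge 3$, pair $\theta_s$ with some $\theta_\ell$, $\ell<s$, satisfying $\WW^{\theta_s}\neq\WW^{\theta_\ell}$. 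Such an $\ell$ always exists (either $\WW^{\theta_s}\neq\WW^{\theta_1}$, or else $\WW^{\theta_s}=\WW^{\theta_1}\neq\WW^{\theta_2}$ and one takes $\ell=2$). The resulting constraint has the form $a_s q_\ell + b_s q_s = 0$ with $b_s\neq 0$; linear independence still follows because the constraint from row $s$ is the first to involve $q_s$. Your independence argument, which assumed all constraints share the variable $q_1$, needs this small adjustment.

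A minor point: your ``realizability'' worry is moot here. In the proposition $q$ is \emph{given}, so the scheme may depend on $q$; one simply picks $\phi^u_\ell,\phi^u_s$ so that the induced posterior lands exactly at the informative point furnished by Lemma~\ref{lem:useful-WE-interval}. The issue of not knowing $q$ is what Algorithm~\ref{ag:DQSS} addresses, not this proposition.
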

\begin{proof}
	Our proof will be constructive. Recall the matrix notation of the signalling scheme, that is, $\Phi = (\phi_s^u)_{u,s \in [m]}$, where $\phi_s^u$ is the $(u,s)$-th entry of the matrix and denotes the probability of sending signal $\zeta^u$ under the state $\theta_s$. We will proceed row-by-row starting from the second row of $\Phi$. 
	
	\emph{Step 1: Constructing the second row:} Set $\phi^2_s = 0$ for all $s \in [m] \setminus \{1,2\}$. Using \eqref{eq:Bayes-rule} we obtain the posterior distribution under the message $\zeta^2$ as 
	\begin{equation} \label{eq:existence-prop-posterior}
		\begin{split}
			\widetilde{q}^{\zeta^2}_{1} &= \frac{\phi^2_1 q_1}{\phi^2_1 q_1 + \phi_{2}^2q_2}, \quad
			\widetilde{q}^{\zeta^2}_2 = \frac{\phi^2_2q_2}{\phi^2_1 q_1 + \phi^2_2 q_2},
			\\
			\widetilde{q}^{\zeta^2}_s &= 0, \quad \text{for all } s \in [m] \setminus \{1,2\}.
		\end{split}
	\end{equation}
	Since $q_1$ and $q_2$ are non-zero by assumption, one can tune $\phi^2_1$ and $\phi^2_2$ to induce any posterior $\widetilde{q}^{\zeta^2}$ satisfying $0 < \widetilde{q}^{\zeta^2}_1 < 1$ and $\widetilde{q}^{\zeta^2}_1 = 1 - \widetilde{q}^{\zeta^2}_2$. By construction we then have $\phi^2_1, \phi^2_2 > 0$. In the following, we will outline the procedure for tuning these parameters such that the flow induced by signal $\zeta^2$ results in an equality constraint for the prior $q$.
	
	Observe that when considering the signal $\zeta^2$, we have simplified the situation by removing the influence from all but the first two states on the posterior (by setting $\phi^2_s = 0$ for $s=3,4,\dots$). That is, we have effectively reduced the analysis to a two state case, as analyzed in Lemma's~\ref{lem:useful-WE-interval} and \ref{lem:Finf-identifies-distribution}. Consequently we can appeal to Lemma \ref{lem:useful-WE-interval} to conclude that there exists a posterior
	$\widetilde{q}$, with $\widetilde{q}_1 \in (0,1)$, $\widetilde{q}_2 = 1 - \widetilde{q}_1$, and $\widetilde{q}_s = 0$ for all $s = [m] \setminus \{1,2\}$, such that there exists a $\widetilde{q}$-WE $f^{\widetilde{q}}$ satisfying $f^{\widetilde{q}}  \in \FF_{\info}$, where $\FF_{\info}$ is given in~\eqref{eq:F-inf}. From Lemma \ref{lem:Finf-identifies-distribution} we know that if there exists a $\widetilde{q}$-WE that lies in $\FF_{\info}$, then the constraints in \eqref{eq:VI-cond-specific2} generated by any $\widetilde{q}$-WE allow for unique identification of $\widetilde{q}$.\footnote{That is, it allows us to determine $\widetilde{q}_1$, and $\widetilde{q}_2$. Since we already know that $\widetilde{q}_s = 0$ for all $s \notin \{1,2\}$ this fully identifies $\widetilde{q}$.} Now pick $\phi_1^2$ and $\phi_2^2$ such that the posterior $\widetilde{q}$ with $f^{\widetilde{q}} \in \FF_\info$ is induced under the signal $\zeta^2$. Consequently, substituting $\widetilde{q} = \widetilde{q}^{\zeta^2}$ into \eqref{eq:existence-prop-posterior} then gives the constraint
	\begin{equation}\label{eq:equlity-obt}
		\frac{\phi_{2}^2}{\phi_1^2(1 - \widetilde{q}_1)}q_2 = q_1.
	\end{equation}
	This constraint is well-posed and non-trivial since $\phi_2^2$ and $\phi_1^2$ are non-zero by design, and as noted $\widetilde{q}_1 =  \widetilde{q}^{\zeta^2}_1 < 1$. Thus, by tuning the values $\phi_1^2$ and $\phi_2^2$, we are able to find an equality constraint~\eqref{eq:equlity-obt} on the prior. 
	
	\emph{Step 2: Constructing rows $3$ through $m$:} For row $s \notin \{1,2\}$ it follows from Lemma~\ref{lem:equality-and-intersection} and $\WW^{\theta_{1}} \neq \WW^{\theta_{2}}$ that we have either $\WW^{\theta_{s}} = \WW^{\theta_{1}}$, in which case $\WW^{\theta_{s}} \neq \WW^{\theta_{2}}$, or we have $\WW^{\theta_{s}} \neq \WW^{\theta_{1}}$. In other words, there exists a state $\theta_\ell$ with $\ell < s$ such that $\WW^{\theta_{s}} \neq \WW^{\theta_{\ell}}$. By setting $\phi^{s}_{s'} = 0$ for all $s' \notin \{s,\ell\}$ we can, similar to before, induce any posterior $\widetilde{q} = \widetilde{q}^{\zeta^s}$ such that $\widetilde{q}_{s'} = 0$ for all $s' \notin \{s,\ell\}$ and so, $\widetilde{q}_s = 1 - \widetilde{q}_\ell$. We can then repeat the previous arguments to show that by tuning $\phi^s_s$ and $\phi^s_\ell$ we can obtain a well-posed, non-trivial equality constraint on $q$ of the form
	\begin{equation*}
		\frac{\phi_{s}^s}{\phi_\ell^s(1 - \widetilde{q}_\ell)}q_s = q_\ell.
	\end{equation*}
	This equality constraint is necessarily linearly independent from the other equality constraints obtained in this manner. To see this note that the constraint generated by row $s$ involves $q_s$, while the set of constraints generated by the rows $s' < s$ do not involve $q_s$ by construction. Thus, in this manner we obtain $m-1$ linearly independent equality constraints $q$.
	
	\emph{Step 3: Constructing the first row:} Once we have constructed the rows $2$ through $m$ of $\Phi$, we select the elements of the first row such that each column of $\Phi$ sums to one. This is always possible and a short procedure is given in Algorithm~\ref{ag:DQSS}.
	
	Finally, note that with the $m-1$ linearly independent equality constraints and the additional independent constraint $\mathbb{1}^\top q = 1$ derived from the condition that $q \in \Delta_1^m$ lies in the simplex, we obtain $m$ constraints that uniquely identify $q$.
\end{proof}
\begin{remark}\longthmtitle{Drawbacks of signals limited to two states} \label{re:signalling-scheme-limitations}
	\rm{In the proof of Proposition \ref{prop:existence-signalling-scheme} we make use of a specific kind of signalling scheme in which each signal except the first has a positive chance of being send only in two states, and the first signal is used to ensure that the signalling scheme satisfies all the required constraints. Mathematically, the signalling scheme belongs to the set
		\begin{equation} \label{eq:phi_set}
			\begin{split}
				&\mathcal{S}_{\mathrm{sig}}  \! := \! \setdefb{\Phi \!  \in \! \colsto{m}{m} \! }{ \! \forall \hspace{2 pt} s \! \in \! [m] \! \setminus \! \{1\}, \enskip  \exists \ell \! < \! s 
					\\ 
					& \hspace{10 pt}\text{such that }  \WW^{\theta_{s}}  \neq  \WW^{\theta_{\ell}}, \text{and } \phi^s_{s'} = 0 \Leftrightarrow s' \not \in \{s,\ell\} }.
			\end{split}
		\end{equation}
		Such a scheme is used because for each signal, as mentioned, the situation is effectively reduced to a two state case, allowing for simpler analysis. However, such a scheme is limited in that it can derive at most one equality constraint from a signal. If a signal can be send in more than two states, more information may be gained. Analysis however becomes more difficult, since it is not clear if and how the result of Lemma \ref{lem:useful-WE-interval} can be generalized to a case involving more than two states.}
	\oprocend
\end{remark}
\subsection{Designing the signalling scheme} \label{ssec:design}
With existence of a $q$-identifying signalling scheme guaranteed under mild conditions, the next step would be to give guidelines for how such a scheme can be designed. For this purpose we provide Algorithm~\ref{ag:DQSS},  which using observations of the flow under various signals, updates a signalling scheme until it is $q$-identifying. The algorithm uses signalling schemes in the set~\eqref{eq:phi_set}, and requires the assumption of Proposition~\ref{prop:existence-signalling-scheme} that the sets $\WW^{\theta_{1}}$ and $\WW^{\theta_{2}}$ are not equal.
\begin{quote}
	\emph{[Informal description of Algorithm \ref{ag:DQSS}]:} The procedure starts with an initial $\Phi(0)$ of the form \eqref{eq:phi_set}, such that for each signal $s \in [m] \setminus \{1\}$ exactly two elements in the $s$-th row of $\Phi(0)$ are non-zero. One of these elements is $\phi_s^s$ and the other is denoted $\phi^s_{\ell(s)}$ (cf. Line~\ref{step1}). At each iteration $N$, and for each row $s \in \II$, we check whether the flow $\widetilde{f}^{\zeta^s}$ observed under the signalling scheme $\Phi(N)$ when sending signal $\zeta^s$ results in an equality constraint on $q$ (cf. Line~\ref{step-check}). If it does, then row $s$ of $\Phi(N)$ is not updated in the for-loop and the ratio between $\phi_s^s$ and $\phi_{\ell(s)}^k$ remains the same for all subsequent iterations (cf. Lines~\ref{step-keep-same} and \ref{step-removal}). If not, then we consider two cases. In the first case, the flow $\widetilde{f}^{\zeta^s} \in \WW^{\theta_{s}}$ and the values $\phi_s^s$ and $\phi^s_{\ell(s)}$ are updated so as to increase the ratio $\phi^s_{\ell(s)}/\phi_s^s$ in signalling scheme $\Phi(N+1)$. In this way, the posterior induced by signal $s$ in the next iteration will assign less probability to state $\theta_s$. This increase in ratio is achieved in Lines~\ref{step-inc-1} through~\ref{step-inc-3}. In the second case, $\widetilde{f}^{\zeta^s} \notin \WW^{\theta_{s}}$ and we decrease the ratio $\phi^s_{\ell(s)}/\phi_s^s$ in Line~\ref{step-dec-1}. After modifying rows in this way, the signalling scheme $\Phi(N+1)$ is updated in Lines~\ref{step:sum-to-11}-\ref{step:seum-to-12} so as to ensure that each column sums to unity while preserving the ratios $\phi^s_{\ell(s)}/\phi_s^s$.
\end{quote}
\begin{algorithm}[!htbp]
	\SetAlgoLined
	\DontPrintSemicolon
	\SetKwInOut{ini}{Initialize}
	\SetKwInOut{giv}{Data}
	\ini{An index set $\mathcal{I} = \{2,3, \cdots, m\}$, counter $N = 0$,  lower and upper bounds $\lowra_s (0) = 0$, $\upra_s (0) = \infty$ for all $s \in \II$, a signalling scheme $\Phi(0) \in \mathcal{S}_{\mathrm{sig}}$ using~\eqref{eq:phi_set}}
	For all $s \in \II$ set $\ell(s) \neq s$ such that $\phi^s_{\ell(s)}(0) \neq 0$ \; \label{step1}
	Compute $f^{\theta_s}$ for all $s \in [m]$ by solving $\VI(\HH,C^{\theta_s})$\;
	\While{$\mathcal{I} \neq \emptyset$}{
		Set $\phi^j_i(N+1) \gets \phi^j_i (N)$ for all $i \in [m]$ and $j \in [m]\setminus \II$ \; \label{step-keep-same}
		\For{$s \in \mathcal{I}$}{
			Obtain $\widetilde{f}^{\zeta^s}$ under scheme $\Phi(N)$ and signal $\zeta^s$ \;
			Check if $\widetilde{q}^{\zeta^s}$ is uniquely determined by \eqref{eq:VI-cond-specific2}\footnotemark \;
			\uIf{$\widetilde{q}^{\zeta^s}$ \rm{is uniquely identified}
				\label{step-check} }{Set $\mathcal{I} \gets \mathcal{I} \setminus \{s\}$\; \label{step-removal}}
			\uElseIf{$\widetilde{f}^{\zeta^s} \in \WW^{\theta_{s}}$}{\setlength{\abovedisplayskip}{-12pt}
				\setlength{\belowdisplayskip}{-8pt}
				\begin{flalign*}
					\! \! \text{Set }\lowra_s(N+1) &\gets \tfrac{\phi^s_{\ell(s)} (N)}{\phi^s_s(N)},	&&\\
					\upra_s(N+1) &\gets \upra_s(N), \text{ and}	&&\\
					\phi^s_s(N+1) &\gets \phi^s_s(N)&&
				\end{flalign*} \label{step-inc-1}\;
				\eIf{$\upra_s(N+1) = \infty$}{Set $\phi^s_{\ell(s)}(N+1) \gets 2 \phi^s_{\ell(s)} (N)$ \label{step-inc-2} }{Set $\phi^s_{\ell(s)}(N+1) \gets \frac{1}{2}\big(\lowra_s(N+1) + \upra_s(N+1)\big) \phi^s_s(N)$\label{step-inc-3}}}
			\Else{\setlength{\abovedisplayskip}{-14pt}
				\setlength{\belowdisplayskip}{-10pt}
				\begin{flalign*}
					\! \! \text{Set }\upra_s(N+1) &\gets \tfrac{\phi^s_{\ell(s)} (N)}{\phi^s_s(N)},	&&\\
					\lowra_s(N+1) &\gets \lowra_s(N), \text{ and}	&&\\
					\phi^s_s(N+1) &\gets \phi^s_s(N)&&	\\
					\phi^s_{\ell(s)}(N+1) & \gets&& \\
					\tfrac{1}{2}\big(\lowra_s(N+1) &+ \upra_s(N+1)\big) \phi^s_s(N)&&
				\end{flalign*}  \label{step-dec-1}\;
			}
		}{Set $a = \max_{i \in [m]} \sum_{j \in [m] \setminus \{1\}} \phi_i^j(N+1)$. \label{step:sum-to-11}\;}
		{Set $\Phi(N+1) \gets \frac{1}{a}\Phi(N+1)$}\;
		\For{$s \in [m]$}
		{Set $\phi^1_s(N+1) \gets 1 - \sum_{j \in [m] \setminus \{1\}} \phi_s^j(N+1)$ \label{step:seum-to-12} \;}
		{Set $N \gets N + 1$}\;
	}
	\caption{Find $q$-identifying Signalling Scheme}
	\label{ag:DQSS}
\end{algorithm}
The\footnotetext{Here $q$ and $f^q$ in \eqref{eq:VI-cond-specific2} are replaced with $\widetilde{q}$ and $f^{\widetilde{q}}$ respectively.}  above procedure identifies the right signalling scheme, and can also determine the prior, since the obtained constraints define it uniquely.  Next we establish the correctness of Algorithm~\ref{ag:DQSS}.
\begin{proposition}\longthmtitle{Convergence of Algorithm~\ref{ag:DQSS}}
	Let $\PP$, $\Theta$, $\CC$, and $q$ be given, and assume that $\WW^{\theta_1} \neq \WW^{\theta_2}$. Then, Algorithm~\ref{ag:DQSS} terminates in a finite number of iterations $N_f$, and the resulting signalling scheme $\Phi(N_f)$ is $q$-identifying.
\end{proposition}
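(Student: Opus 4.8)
The plan is to establish convergence by analyzing each row $s \in \II$ independently, since the only coupling between rows happens in Lines~\ref{step:sum-to-11}--\ref{step:seum-to-12}, where the whole scheme is rescaled by a positive scalar and the first row is adjusted; crucially this rescaling preserves every ratio $\phi^s_{\ell(s)}/\phi^s_s$ for $s \in [m]\setminus\{1\}$, and since the posterior $\widetilde{q}^{\zeta^s}$ from~\eqref{eq:Bayes-rule} depends on row $s$ only through this ratio, the sequence of posteriors (and hence observed flows) for signal $\zeta^s$ is governed entirely by the bisection-type update on $r_s(N) := \phi^s_{\ell(s)}(N)/\phi^s_s(N)$. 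So the core of the argument is: for a fixed row $s$, the update rule on $r_s$ drives the induced posterior into the region where Lemma~\ref{lem:useful-WE-interval} guarantees a flow in $\FF_\info$, after which Lemma~\ref{lem:Finf-identifies-distribution} makes $\widetilde{q}^{\zeta^s}$ uniquely identifiable and row $s$ is removed from $\II$ in finitely many steps.

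First I would make precise the correspondence between $r_s$ and the posterior weight. With $\phi^s_{s'}=0$ for $s'\notin\{s,\ell(s)\}$, equation~\eqref{eq:Bayes-rule} gives $\widetilde{q}^{\zeta^s}_s = q_s/(q_s + r_s q_{\ell(s)})$, which is continuous and strictly decreasing in $r_s$, ranging over $(0,1)$ as $r_s$ ranges over $(0,\infty)$. By Lemma~\ref{lem:useful-WE-interval} (applied to the effective two-state game on states $\{s,\ell(s)\}$, valid since $\WW^{\theta_s}\neq\WW^{\theta_{\ell(s)}}$), there is a threshold value of the posterior weight on $\theta_s$, say a point $\bar c$, such that posteriors with weight on $\theta_s$ above this threshold induce flows in $\WW^{\theta_s}$, an adjacent open interval of posteriors induces flows in $\FF_\info$, and below that the flow is still uniquely identifying generically. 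Translating via the monotone map, there is a critical value $r^\star_s \in (0,\infty)$ and an interval $(r^\star_s, r^\star_s + \rho_s)$ of ratios whose posteriors land in $\FF_\info$; by Lemma~\ref{lem:Finf-identifies-distribution} any such posterior is uniquely determined by~\eqref{eq:VI-cond-specific2}. The goal is thus to show the algorithm's update on $r_s$ enters this open interval in finitely many iterations, or else lands on an even more favorable identifying value and terminates earlier.

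Next I would analyze the update dynamics on $r_s$. While $\upra_s=\infty$, each iteration in which $\widetilde{f}^{\zeta^s}\in\WW^{\theta_s}$ (equivalently $r_s$ too small, i.e., $r_s \le r^\star_s$) doubles $r_s$ (Line~\ref{step-inc-2}) and records $\lowra_s$, so after finitely many doublings we obtain $r_s > r^\star_s$; the first such step either already lands in $\FF_\info$ (done) or overshoots, triggering the $\Else$ branch which sets $\upra_s$ finite. Once both $\lowra_s$ and $\upra_s$ are finite, the update becomes exact bisection: $r_s(N+1) = \tfrac12(\lowra_s+\upra_s)$, with $\lowra_s$ always a value giving $\widetilde{f}\in\WW^{\theta_s}$ (so $\lowra_s \le r^\star_s$) and $\upra_s$ always a value giving $\widetilde{f}\notin\WW^{\theta_s}$ (so $\upra_s > r^\star_s$), hence the invariant $\lowra_s \le r^\star_s < \upra_s$ is maintained and $\upra_s - \lowra_s$ halves each step. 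Therefore $\upra_s - \lowra_s \to 0$, and since $r^\star_s < r^\star_s + \rho_s$ with $\rho_s>0$, after finitely many bisections the bracket $[\lowra_s,\upra_s]$ has width less than $\rho_s$; I would then argue the queried midpoint $r_s(N+1)$ must fall in $(r^\star_s, r^\star_s+\rho_s)$ (or on another identifying value), because it lies strictly above $\lowra_s \ge \upra_s - \rho_s \ge$ something forcing it past $r^\star_s$ — more carefully, once the interval is narrow and straddles $r^\star_s$, the midpoint is within $\rho_s$ of $r^\star_s$ on the upper side whenever it exceeds $r^\star_s$, and it cannot keep falling on the $\WW^{\theta_s}$ side forever since that would pin $r^\star_s$ to the top of a shrinking interval. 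Either way row $s$ is removed from $\II$ in finitely many steps.

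Finitely many rows, each removed after finitely many iterations, and rows already removed are never touched again (Line~\ref{step-keep-same} freezes them, and the global rescaling preserves their ratios so their identifying property persists) — hence $\II$ becomes empty after finitely many total iterations $N_f$. At termination, for every $s \in [m]\setminus\{1\}$ the posterior $\widetilde{q}^{\zeta^s}$ was uniquely identified by~\eqref{eq:VI-cond-specific2}, which via~\eqref{eq:existence-prop-posterior}--\eqref{eq:equlity-obt} yields a nontrivial linear equality constraint on $q$ involving $q_s$; as argued in the proof of Proposition~\ref{prop:existence-signalling-scheme}, these $m-1$ constraints are linearly independent (row $s$'s constraint is the only one involving $q_s$) and together with $\mathbb{1}^\top q = 1$ they uniquely pin down $q$, so $\QQ_{\Phi(N_f)} = \{q\}$ and $\Phi(N_f)$ is $q$-identifying. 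The main obstacle I anticipate is the boundary bookkeeping in the bisection argument: making rigorous that the algorithm's discrete queries actually land inside the \emph{open} informative interval $(r^\star_s, r^\star_s+\rho_s)$ rather than forever approaching $r^\star_s$ from the wrong side — this requires using that $\FF_\info$ corresponds to a genuinely open set of ratios of positive length (from Lemma~\ref{lem:useful-WE-interval}) together with the fact that a value giving $\widetilde f \notin \WW^{\theta_s}$ but not in $\FF_\info$ is still identifying, so "overshooting" into that third region also terminates the row; I would also need to handle the degenerate possibility that a midpoint lands exactly on $r^\star_s$ (measure-zero boundary between $\WW^{\theta_s}$ and $\FF_\info$), which by continuity of edge-flows (Lemma~\ref{lem:continuity-edge-flow}) is itself an identifying flow or is immediately escaped on the next step.
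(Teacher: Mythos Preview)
Your approach is the same as the paper's: reduce to a per-row analysis via the ratio $r_s(N)=\phi^s_{\ell(s)}(N)/\phi^s_s(N)$, translate Lemma~\ref{lem:useful-WE-interval} into an open interval of informative ratios, and argue that doubling followed by bisection enters that interval in finitely many steps. The structural reduction and the use of Lemmas~\ref{lem:useful-WE-interval} and~\ref{lem:Finf-identifies-distribution} are exactly right.

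There is, however, a genuine gap in your bisection argument, and the fix you propose for it is incorrect. You maintain only the invariant $\lowra_s \le r^\star_s < \upra_s$, i.e., that the bracket straddles the single boundary point $r^\star_s$. With only this, nothing prevents the bracket from collapsing onto $r^\star_s$ with midpoints forever alternating just below and just above it, never landing inside $(r^\star_s, r^\star_s+\rho_s)$. You flag this correctly as the main obstacle, but your proposed resolution---that ``a value giving $\widetilde f\notin\WW^{\theta_s}$ but not in $\FF_\info$ is still identifying''---is false: that third region is precisely where the algorithm sets $\upra_s$ and continues, because the posterior is \emph{not} uniquely determined there.

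The correct (and simple) fix, which the paper uses, is to observe that the bracket always contains the \emph{entire} informative interval, not just its left endpoint. Indeed, $\upra_s$ is updated only when the check in Line~\ref{step-check} fails, i.e., when $\widetilde q^{\zeta^s}$ is \emph{not} uniquely identified; by Lemma~\ref{lem:Finf-identifies-distribution} this forces $r_s(N)\notin (r^\star_s, r^\star_s+\rho_s)$, and combined with $\widetilde f^{\zeta^s}\notin\WW^{\theta_s}$ (so $r_s(N)>r^\star_s$) one gets $r_s(N)\ge r^\star_s+\rho_s$. Hence $\upra_s(N)\ge r^\star_s+\rho_s$ and $\lowra_s(N)\le r^\star_s$ for all $N$, so $(r^\star_s, r^\star_s+\rho_s)\subset(\lowra_s(N),\upra_s(N))$ always. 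Since bisection halves the bracket width each step while the bracket must contain an interval of fixed positive length $\rho_s$, the midpoint must land in $(r^\star_s, r^\star_s+\rho_s)$ after finitely many steps. With this observation your argument goes through, and your boundary concern about landing exactly on $r^\star_s$ evaporates.
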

\begin{proof}
	For a signal $s \in [m] \setminus \{1\}$, we look at the properties of $\widetilde{f}^{\zeta^s}$, $\lowra_s(N)$, $\upra_s(N)$, and $r_s(N) := \frac{\phi^s_{\ell(s)}}{\phi^s_s}$ as the algorithm iterates. We first show that $\lowra_s(N) \leq \upra_s(N)$ for all $N$, which holds by definition for the initial iterate. We suppress the argument $N$ in the  following few statements for the sake of convenience. Observe that the signalling scheme maintains the same sparsity pattern, of the form~\eqref{eq:phi_set}, in all iterations. That is, $\phi^s_i = 0$ for all $i \notin \{\ell(s),s\}$ and all iterations. This effectively reduces the analysis to that of a two-state situation, meaning that the posterior under signal $\zeta^s$, denoted $\widetilde{q} = \widetilde{q}^{\zeta^s}$, satisfies $\widetilde{q}_i = 0$ for all $i \notin \{\ell(s),s\}$ and any choice of $\phi^s_{\ell(s)}$, $\phi^s_s$. From Lemma~\ref{lem:useful-WE-interval}, there exist constants $a_s \in (0,1]$ and $c_s \in (0,1)$ with $a_s > c_s$ such that
	\begin{equation}\label{eq:a-prop}
		\begin{aligned}
			\widetilde{q}_s \geq a_s &\Rightarrow \WW^{\widetilde{q}} \cap \WW^{\theta_{s}} = \emptyset,
			\\
			a_s > \widetilde{q}_s > c_s &\Rightarrow  \exists f^{\widetilde{q}} \in \WW^{\widetilde{q}} \text{ such that } f^{\widetilde{q}} \in \FF_{\info}.\text{\footnotemark}
		\end{aligned}
	\end{equation}	
	\footnotetext{Here, instead of considering all $f \in \HH$ we only consider flows $f$ for which there exists a ${\varphi \in \setdef{\xi \in \Delta_1^m}{\xi_{s'} = 0, \text{ for } s' \notin \{\ell(s),s\}} \setminus \{\varphi^{\theta_{s}}\}_{s\in [m]}}$ such that $f \in \WW^{\varphi}$ in the definition of $\FF_{\info}$.} From~\eqref{eq:Bayes-rule}, we have
	\begin{align*}
		\widetilde{q}_s &= \frac{\phi^s_s q_s}{\phi^s_s q_s + \phi^s_{\ell(s)} q_{\ell(s)}}	= \frac{ q_s}{ q_s + \frac{\phi^s_{\ell(s)}}{\phi^s_s} q_{\ell(s)}}.
	\end{align*}
	Note that the influence of $\Phi$ on $\widetilde{q}_s$ is completely determined by the ratio $r_s = \frac{\phi^s_{\ell(s)}}{\phi^s_s}$ and that $\widetilde{q}_s$ is monotonically decreasing in $r_s$ with $\lim_{r_s \rightarrow \infty} \widetilde{q}_s = 0$ and $\lim_{r_s \rightarrow 0} \widetilde{q}_s = 1$. Thus, given \eqref{eq:a-prop}, and the relationship between $\widetilde{q}_s$ and $r_s$, we deduce that there exist constants $b_s \ge 0$ and $d_s > b_s$ such that
	\begin{equation}\label{eq:r-prop}
		\begin{aligned}
			r_s \le  b_s &\Rightarrow f^{\widetilde{q}} \in \WW^{\theta_{s}},
			\\
			d_s > r_s >  b_s &\Rightarrow f^{\widetilde{q}} \in \FF_{\info}.
		\end{aligned}
	\end{equation}
	With this in mind, we now analyze the evolution of $\lowra_s$ and $\upra_s$. Note that $\lowra_s$ is only changed in line~\ref{step-inc-1} of the algorithm. Here we set $\lowra_s(N) = r_s(N)$ whenever ${f^{\widetilde{q}} \in \WW^{\theta_{s}}}$ and thus from~\eqref{eq:r-prop}, $\lowra_s(N) \leq b_k$ for all $N$. Similarly, ${\upra_s(N) = r_s(N)}$ whenever $f^{\widetilde{q}} \notin \WW^{\theta_{s}}$ and $\widetilde{q}^{\zeta^s}$ is not uniquely identified. As shown in the proof of Proposition~\ref{prop:existence-signalling-scheme}, whenever $f^{\widetilde{q}} \in \FF_{\info}$ we obtain an informative equality constraint. From \eqref{eq:r-prop} we then conclude that $\upra_s(N) \geq d_s$. We now have $\lowra_s(N) < \upra_s(N)$ for all $N$. In fact, we have ${(b_s,d_s) \subseteq \big(\lowra_s(N), \upra_s(N)\big)}$ for all $N$. We also note that since $d_s > b_s$ we have $d_s > 0$ and $b_s < \infty$. Now we look at the evolution of $r_s(N)$. We will show that $r_s(N) \in (b_s,d_s)$ for some finite $N$ and at that iteration, we obtain an informative equality constaint corresponding to signal $s$. This in turn shows termination of the algorithm in finite number of iterations. Consider three cases: (a) ${d_s = \infty}$; (b) $b_s = 0$ and $d_s < \infty$; and (c) otherwise. In case (a), at any $N$, we have either $r_s(N) \in (b_s,d_s)$ and we find an informative equality constraint, or $r_s(N) \leq b_s$, implying $f^{\widetilde{q}} = f^{\theta_s}$. In the latter case, $r_s(N+1) = 2 r_s(N)$. Thus, there exists some $\bar{N}$ such that $r_s(\bar{N}) > b_s$, implying $r_s(\bar{N}) \in (b_s,d_s)$. Similarly, in case (b), we have either $r_s(N) \in (b_s,d_s)$ or $r_s(N) \geq d_s$. In the latter case, $r_s(N)$ is halved for the next iteration and so in finite number of steps $r_s$ reaches $(b_s,d_s)$. In case (c), the arguments for case (a) and (b) can be repeated to show that there exists $\bar{N}$ such that $\lowra_s(\bar{N}) > 0$ and $\upra_s(\bar{N}) < \infty$. Looking at the algorithm, we see that $\upra_s(N) - \lowra_s(N)$ is halved in every subsequent iteration $N \ge \bar{N}$ . Since $r_s(N)$ always lies in the interval $\big(\lowra_s(N),\upra_s(N)\big)$, it then reaches the set $(b_s,d_s)$ in a finite number of iterations yielding an informative equality constraint. Following these facts, we conclude that an informative equality constraint is found in a finite number of iterations for each signal which completes the proof.
\end{proof}
\begin{remark}\longthmtitle{Practical considerations of implementing Algorithm~\ref{ag:DQSS}}\label{re:implement}
	{\rm The purpose of Algorithm \ref{ag:DQSS} is to demonstrate how insights from Proposition \ref{prop:existence-signalling-scheme} can be applied. It gives a methodical approach for constructing a $q$-identifying signalling scheme. However, it has several drawbacks worth noting:
		
		1) First we note that the TIS can only send one signal at any instance of the game, and does not have free choice of which signal to send, since after observing the state, the probability of a signal being sent is fixed by the current signalling scheme. Therefore, in practice, the TIS cannot send all signals in an ordered manner at each iteration of the algorithm and then update $\Phi$. Instead, it would be best to update a row of $\Phi$ after each instance of a game when the used signal does not induce a useful equality constraint. We have presented the algorithm in its current form, rather than the practically implementable one, to simplify the exposition of the main idea.
		
		2) When additional information on the prior is available, such as a lower bound $q_s \geq \epsilon > 0$ which holds for all ${s \in [m]}$, it may be possible to determine in advance which signalling scheme will supply informative constraints on the prior. For instance looking at Figure \ref{fig:2-road-2-state}, we see that whenever $p_1 \in (0.133,0.8)$ the result is a WE belonging uniquely to the associated distribution. If we then have, for instance, $p_1,p_2 \geq 0.25$ it follows that for this example an uninformative scheme (with $\phi^u_s = 0.5$ for all $u,s \in [2]$) is $q$-identifying.
		
		3) As mentioned in Remark \ref{re:signalling-scheme-limitations}, it may be beneficial to allow a signal to be send in more then two states, in order to obtain multiple equality constraints from a single signal. This may significantly reduce the number of iterations required to identify the prior, especially in combination with the above mentioned possibility of using additional knowledge about the prior to determine a signalling scheme in advance that necessarily provides informative constraints.
		
		4) Finally, we note that in this paper we have only considered the question of identifying the prior. In practice, the social cost incurred during the identification process is also important. For instance, once a signal $\zeta^u$ has resulted in an equality constraint on $q_\ell$ and $q_s$, that specific signal is no longer required for identification and can be modified with the aim of minimizing the social cost. However, the comparison between benefits of obtaining a better estimate of the prior and optimizing with respect to the current estimate is more involved and left for future work. \oprocend 
	}
\end{remark} \vspace{10 pt}
\begin{example}\longthmtitle{Application of Algorithm~\ref{ag:DQSS} in $4$-path $2$-state case}
	{\rm To shed light on the conclusions of Proposition \ref{prop:existence-signalling-scheme} and the workings of Algorithm \ref{ag:DQSS}, we revisit the 4-path, 2-state case in Example \ref{ex:constant-WE}. Setting $q = (0.5,0.5)^\top$, and using the initial signalling scheme
		\begin{equation*}
			\Phi(0) = 
			\left(
			\begin{array}{cc}
				0.5	&0.5	\\
				0.5	&0.5
			\end{array}
			\right),
		\end{equation*}
		we go through the steps of Algorithm \ref{ag:DQSS} to find a $q$-identifying signalling scheme. From \eqref{eq:4,2-example-cost} we derive $f^{\theta_1} = (0.8,0.2,0,0)^\top$ and $f^{\theta_2} = (0,0,0,1)^\top$.
		Using $\Phi(0)$ as a signalling scheme, \eqref{eq:Bayes-rule} gives us $\widetilde{q}^{\zeta^2} = \widetilde{q} = (0.5,0.5)^\top$. We can then use the functions $C^{\widetilde{q}}_p(f)$ and \eqref{eq:def-of-WE-constraint} to find that $f^{\widetilde{q}} = (0,\frac{5}{9},\frac{4}{9},0)^\top$ is the flow observed after sending signal $\zeta^2$. Even though two paths carry positive flow, the resulting constraint is trivial, since $C^{\theta_2}_2(\frac{5}{9}) - C^{\theta_2}_3(\frac{4}{9}) = 0$. In Figure \ref{fig:4-road-2-state} this can also be observed by noting that $\widetilde{q} = (0.5,0.5)^\top$ is in a region of distributions where the flow remains constant. We do have $f^{\widetilde{q}} \neq f^{\theta_2}$ which means that we will update $\Phi(0)$ according to Line~\ref{step-dec-1}. Setting the values as prescribed there, we get $\phi^2_1(1) = 0.25$, $\phi^2_2(1) = 0.5$. In Lines~\ref{step:sum-to-11}-\ref{step:seum-to-12} we then update the first row to ensure that all columns of $\Phi(1)$ sum to one, and thus we arrive at
		\begin{equation*}
			\Phi(1) = 
			\left(
			\begin{array}{cc}
				0.75	&0.5	\\
				0.25	&0.5
			\end{array}
			\right).
		\end{equation*}
		Using the new signalling scheme we find ${\widetilde{q}^{\zeta^2} = \widetilde{q} = (\frac{1}{3}, \frac{2}{3})^\top}$, resulting in $f^{\widetilde{q}} = (0,\frac{32}{68},\frac{23}{68},\frac{13}{68})^\top$. Substituting $\phi^2_1(1)$, $\phi^2_2(1)$ and $f^{\widetilde{q}}$ into \eqref{eq:posterior-WE-constraint}, where we set $p = 2$, $r = 4$, we get
		\begin{align*}
			\left(
			\begin{array}{c}
				\frac{1}{4}(\frac{32}{68}\frac{1}{2} + 1.7 - \frac{2}{5}\frac{13}{68} - 3.5) \vspace{2 pt}	\\
				\frac{1}{2}(\frac{32}{68}\frac{1}{2} + 1.7 - \frac{3}{5}\frac{13}{68} - 1)
			\end{array}
			\right)^\top q = 0.
		\end{align*}
		Solving this we find $q_1 = q_2$. Taken together with ${q_1 + q_2 = 1}$ this implies $q = (0.5,0.5)^\top$. Thus, the $q$-identifying scheme exists and is obtained in one iteration of the algorithm.} \oprocend
\end{example}
\section{Multiple priors and robust identification} \label{sec:relaxed-prior}
Here, we discuss possible generalizations of our setup that can bring it closer to real-life implementation. First we discuss the case where the population does not have a common prior and later we show  how the signalling schemes that we obtain can identify other priors.
\subsection{Heterogeneous population}
Consider the case where the population of users traversing the network are divided into $K$ groups, each containing users that share a common prior. In particular, assume that ${c^k \in (0,1]}$ is the fraction of users sharing the prior $q[k] \in \Delta_1^m$ and we have $\sum_{k = 1}^K c^k = 1$. We assume that each group $k \in [K]$ uses the same set of available paths. Note that we considered $K = 1$ in the earlier sections. After a public signal $\zeta^u$ is received, each group $k$ routes its fraction of the flow according to the $\widetilde{q}^{\zeta^u}[k]$-WE, where $\widetilde{q}^{\zeta^u}[k]$ is the posterior formed by group $k$ under a signal $\zeta^u$ and some signalling scheme $\Phi$. The aggregated flow observed by the TIS is 
\begin{equation} \label{eq:aggregate-flow}
	\widetilde{f}^{\zeta^u} := \sum_{k \in [K]} c^k \widetilde{f}^{u,k},
\end{equation}
where $\widetilde{f}^{u,k}$ is a $\widetilde{q}^{\zeta^u}[k]$-WE.
	
First, we note that for the case $K = 2$, where $c^1,c^2$ and $q[1]$ are known, then identification of $q[2]$ can be achieved by following Algorithm~\ref{ag:DQSS}. This is so because for each signal we observe $\widetilde{f}^{\zeta^u}$ while we know $\widetilde{f}^{u,1}$. Thus, following~\eqref{eq:aggregate-flow}, one gets $\widetilde{f}^{u,2} = \frac{\widetilde{f}^{\zeta^u} - c^1 \widetilde{f}^{u,1}}{c^2}$.  Identification of $q[2]$ can then be done using Algorithm~\ref{ag:DQSS} by perceiving the second group as the only one being routed. Next examine the case where more than one prior is unknown. Here, even when the fractions $c^1$ and $c^2$ are known, it is not clear how to design a signalling scheme that can identify both priors. The reason being that now we have an additional $m$ unknowns as compared to the case of single prior, while the amount of information that can be obtained from a signalling scheme does not grow.
	
Finally, consider the case where all priors $\{q[k]\}$ are known, but the fractions $\{c^k\}$ are not. Here, for a given signalling scheme ${\Phi \in \colsto{z}{m}}$, we define the following matrix:
\begin{equation} \label{eq:matrix-of-ind-flows}
	M := \left(\begin{array}{cccc}
		1	&1	&\cdots	&1
		\\
		\widetilde{f}^{1,1}	&\widetilde{f}^{1,2}	&\cdots	&\widetilde{f}^{1,K}
		\\
		\widetilde{f}^{2,1}	&\ddots	&	&\vdots
		\\
		\vdots	&	&	&
		\\
		\widetilde{f}^{z,1}	&\widetilde{f}^{z,2}	&\cdots	&\widetilde{f}^{z,K}
	\end{array}\right)
\end{equation}
and present the following result.
\begin{lemma} \longthmtitle{Identifying population size per prior}
	Let $\PP$, $\Theta$, $\CC$ be given, together with pairs of fractions and priors $\{(c^k,q^k)\}_{k \in [K]}$, $K \in \naturalnumbers$ satisfying $c^k > 0$ for all $k \in [K]$ and $q^k \neq q^\ell$ for all $k \neq \ell$. A signalling scheme ${\Phi \in \colsto{z}{m}}$ allows us to uniquely identify the vector ${c := (c^1,c^2,\cdots,c^K)^\top}$ if and only if
	\begin{equation*}
		\textnormal{rank}(M) = K.
	\end{equation*}
\end{lemma}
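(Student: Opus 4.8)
The plan is to recast identification of $c$ as the question of whether a single linear system has a unique solution, after which the stated rank condition follows by elementary linear algebra. First I would pin down what data the TIS actually has: since the priors $\{q[k]\}_{k\in[K]}$ and the committed scheme $\Phi$ are known, Bayes' rule~\eqref{eq:Bayes-rule} fixes every posterior $\widetilde{q}^{\zeta^u}[k]$, and hence---via the variational-inequality characterization of a $\varphi$-WE together with the uniqueness of the induced edge-flows recorded in~\eqref{eq:edge-flow-constant}---fixes every group-equilibrium edge-flow $\widetilde{f}^{u,k}$. I would also remark at the outset that, although the $\widetilde{f}^{u,k}$ need not be unique as path-flows, their edge-flows are, so $M$ and the aggregate~\eqref{eq:aggregate-flow} should be read in terms of edge-flows; this makes the observed flow a well-defined object and does not affect the argument. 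Consequently $M$ is determined \emph{a priori}, independently of the unknown fractions, and the only new information obtained from running the game is the list $\widetilde{f}^{\zeta^u}$, $u\in[z]$. Stacking~\eqref{eq:aggregate-flow} over all $u$ and appending $\mathbb{1}^\top c = 1$ shows that the true $c$ solves
\begin{equation*}
	Mc = b, \qquad b := \bigl(1,\ (\widetilde{f}^{\zeta^1})^\top,\ \dots,\ (\widetilde{f}^{\zeta^z})^\top\bigr)^\top ,
\end{equation*}
where $b$ assembles the observations with the known entry $1$ on top. Since a solution of $Mc=b$ is known to exist, unique identifiability of $c$ is exactly the statement that this system has a unique solution, i.e.\ $\ker M = \{0\}$, i.e.\ $\textnormal{rank}(M)=K$.

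For the ``if'' direction the argument is then immediate: if $\textnormal{rank}(M)=K$ and $\widehat{c}$ is any admissible profile of fractions that reproduces the same aggregate flows under $\Phi$, then $M\widehat{c}=b=Mc$, so $c-\widehat{c}\in\ker M=\{0\}$ and $\widehat{c}=c$.

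The ``only if'' direction is the part I expect to require the most care, and the main subtlety there is making sure the perturbed candidate is again a legitimate profile of fractions \emph{and} that it truly produces the same observations---both of which hinge on the all-ones first row of $M$ and the strict positivity of $c$. Concretely: assuming $\textnormal{rank}(M)<K$, pick a nonzero $v\in\ker M$; since the top row of $M$ is $\mathbb{1}^\top$ we get $\mathbb{1}^\top v = 0$, and as $v\neq 0$ this forces $v$ to have at least one strictly positive and at least one strictly negative component. Because $c^k>0$ for every $k$, there is a $\bar\varepsilon>0$ small enough that $\widehat{c}:=c+\bar\varepsilon v$ still has all components strictly positive while $\mathbb{1}^\top\widehat{c}=1$, so $\widehat{c}\neq c$ is a genuinely different admissible profile. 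The decisive observation is that posteriors, and hence the flows $\widetilde{f}^{u,k}$, depend only on $\{q[k]\}$ and $\Phi$ and not on the fractions; so under $\widehat{c}$ the TIS would observe $\sum_k \widehat{c}^k\widetilde{f}^{u,k} = \sum_k c^k\widetilde{f}^{u,k} + \bar\varepsilon\sum_k v^k\widetilde{f}^{u,k} = \widetilde{f}^{\zeta^u}$ for every $u$, the last equality because $Mv=0$ gives $\sum_k v^k\widetilde{f}^{u,k}=0$. Hence $c$ and $\widehat{c}$ induce identical observations under $\Phi$, so $c$ cannot be identified, establishing the contrapositive.
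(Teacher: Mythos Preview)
Your proof is correct and follows essentially the same route as the paper: both reduce identification to the linear system $Mc=b$, invoke full column rank for uniqueness, and for the converse perturb $c$ along a nonzero kernel vector, using $c>0$ and the all-ones top row to land back in $\Delta_1^K$. Your remark that $M$ and the aggregates should be read at the level of edge-flows (to sidestep non-uniqueness of path-flow equilibria) is in fact more careful than the paper's own treatment, which leaves this point implicit.
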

\begin{proof}
	We know that $c$ must satisfy $\mathbb{1}^\top c = 1$, since the fractions sum up to the whole of the population. This, together with \eqref{eq:aggregate-flow} and \eqref{eq:matrix-of-ind-flows} implies that $c$ must satisfy
	\begin{equation} \label{eq:M-times-c}
		M c = \left( 1, \enskip \widetilde{f}^{\zeta^1}, \enskip \widetilde{f}^{\zeta^2}, \enskip \cdots, \widetilde{f}^{\zeta^z} \right)^\top. 
	\end{equation}
	When $\textnormal{rank}(M) = K$, that is, $M$ has full column rank, the above equation has a unique solution. If on the other hand $\textnormal{rank}(M) < K$, then the equality~\eqref{eq:M-times-c} still holds. However, in this case there also exists $\widetilde{c} \in \real^K$ such that $M\widetilde{c} = 0$ and $\widetilde{c} \neq 0$. Since $c > 0$, there exists $\epsilon > 0$ such that $c + \epsilon \widetilde{c} \geq 0$. We then have $M(c + \epsilon \widetilde{c}) = Mc$, which implies $c + \epsilon \widetilde{c}$ is in $\Delta_1^K$ and is a solution to \eqref{eq:M-times-c}. In other words, there exist multiple solutions to \eqref{eq:M-times-c} in $\Delta_1^K$.
\end{proof}
In general it is difficult to prescribe guidelines on how to design $\Phi$ in order to ensure that $M$ has full row rank. However, when $z \geq K$ and flows $\{f^{\theta_{s}}\}_{s \in [k]}$ are linearly independent, one can design the signal $\zeta^k$ such that $\widetilde{q}^{\zeta^k}[\ell]$ is arbitrarily close to $q^{\theta_{k}}$. In this way, the induced WE $\widetilde{f}^{u,\ell}$ will get arbitrarily close to $f^{\theta_{k}}$ for all $\ell$. Since flows $\{f^{\theta_{s}}\}_{s \in [K]}$ are linearly independent, this will result in $M$ having full column rank. Also note that when considering $K = 2$, all that is required is that there exist $k, \ell \in [K]$ and a $u \in [z]$ such that $\widetilde{f}^{u,k} \neq \widetilde{f}^{u,\ell}$. 
\subsection{Robustness of signalling schemes in identifying priors}
One of the limitations of our results is that we consider the prior distribution that the population adheres to as fixed. However, we have the following robustness result on $q$-identifying signalling schemes with respect to perturbations in the prior, which shows that for a $q$-identifying signalling scheme $\Phi$ for which the obtained equality constraints are enough to identify $q$, there exists a neighbourhood of $q$ such that for all priors $\widehat{q}$ in this neighbourhood $\Phi$ is $\widehat{q}$-identifying.
\begin{lemma}\longthmtitle{Robustness of $\Phi$ for identifying prior}
	Let $\PP$, $\Theta$, $\CC$, a prior $q$, and a $q$-identifying signalling scheme $\Phi$ be given. In addition, let $\QQ_{\Phi}^=$ be defined as
	\begin{align}
		\QQ_{\Phi}^= \! :=  \! \!  \Set{ \! \! \! \begin{array}{l} \! \! \!  \varphi \! \in \! \real^m \end{array} \! \! \! \! \! | \! \! \! \! \begin{array}{l} \hspace{55 pt} \mathbb{1}^\top\varphi 	= 1,	
		\\
		\sum \limits_{s \in [m]} \! \! \phi^u_s \big(C_p^{\theta_{s}}(\widetilde{f}^{\zeta^u} \! ) \! - \!  C_r^{\theta_{s}}(\widetilde{f}^{\zeta^u} \! )\big)\varphi_s \! = \! 0
		\\
		\forall u \! \in \! [z], \enskip p,r \! \in \! \PP \textnormal{ with } \widetilde{f}^{\zeta^u}_p, \widetilde{f}^{\zeta^u}_r \! > 0.	
		\end{array} \! \! \! \! \!  } \! \! . \label{eq:Q-equality}
	\end{align}
	If $\QQ_{\Phi}^= = \{q\}$, then there exists a $\delta > 0$ such that for all $\widehat{q} \in \Delta_1^m$ with $\norm{q - \widehat{q}} < \delta$ the signalling scheme $\Phi$ is $\widehat{q}$-identifying.
\end{lemma}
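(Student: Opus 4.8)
The plan is to combine an elementary rank argument with the continuity results of Section~\ref{ssec:dist-eq}. Since $\QQ_\Phi^= = \{q\}$, the affine system made up of $\mathbb{1}^\top\varphi = 1$ together with all the equalities occurring in~\eqref{eq:Q-equality} has $q$ as its \emph{unique} solution, hence its coefficient matrix has rank $m$. Therefore I can select $m-1$ of those equalities, indexed by triples $(u_i,p_i,r_i)$, $i\in[m-1]$ (with $\widetilde f^{\zeta^{u_i}}_{p_i},\widetilde f^{\zeta^{u_i}}_{r_i} > 0$), such that the $m\times m$ matrix $B(q)$ whose first row is $\mathbb{1}^\top$ and whose $(i+1)$-th row is the coefficient vector $\big(\phi^{u_i}_s(C_{p_i}^{\theta_{s}}(\widetilde f^{\zeta^{u_i}}) - C_{r_i}^{\theta_{s}}(\widetilde f^{\zeta^{u_i}}))\big)_{s\in[m]}$ is invertible. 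Keeping the triples $(u_i,p_i,r_i)$ fixed, I regard the analogously defined matrix $B(\widehat q)$ as a function of a prior $\widehat q$ in a neighbourhood of $q$, in which $\widetilde f^{\zeta^{u_i}}$ is replaced by the equilibrium flow induced by signal $\zeta^{u_i}$ when the prior is $\widehat q$; its entries depend only on the (unique) edge-flows of those equilibria, by~\eqref{eq:path-cost-from-edges}.

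Next I would prove that $\widehat q \mapsto \det B(\widehat q)$ is continuous at $q$. For each $i$ the Bayes denominator $\sum_{\ell}\phi^{u_i}_\ell q_\ell$ is strictly positive, because the $u_i$-th row of $\Phi$ is not identically zero (otherwise the $(i+1)$-th row of $B(q)$ would vanish, contradicting invertibility) and $q$ lies in the interior of $\Delta_1^m$. Hence, by~\eqref{eq:Bayes-rule}, the posterior induced by $\zeta^{u_i}$ depends continuously on the prior near $q$; Lemma~\ref{lem:continuity-edge-flow} then makes the induced equilibrium edge-flows depend continuously on this posterior, and continuity of the cost functions in $\CC$ carries this over to the entries of $B(\widehat q)$ and to $\det B(\widehat q)$. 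Since $\det B(q)\neq 0$, there is $\delta_1>0$ with $\det B(\widehat q)\neq 0$ whenever $\norm{q-\widehat q}<\delta_1$.

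The delicate point --- and the main obstacle --- is that perturbing the prior can change \emph{which} equalities appear in~\eqref{eq:posterior-WE-constraint}, since the set of used paths may change; I need the $m-1$ selected equalities to survive the perturbation. By Lemma~\ref{lem:used-set-properties}, $p_i,r_i$ being used for the $\zeta^{u_i}$-posterior means every edge of $p_i$ and of $r_i$ carries strictly positive flow there; by the same continuity chain as above there is $\delta_2>0$ such that for $\norm{q-\widehat q}<\delta_2$ these edge-flows remain positive, so (again by Lemma~\ref{lem:used-set-properties}) $p_i,r_i$ stay used for the perturbed posterior and the corresponding equality of~\eqref{eq:posterior-WE-constraint-1} is still imposed. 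Any paths that newly become used only add further equality or inequality constraints, which can only shrink the feasible set, so they cause no trouble. Put $\delta:=\min\{\delta_1,\delta_2\}$, shrinking further if needed so that $\widehat q$ stays in the interior of $\Delta_1^m$.

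To finish, fix $\widehat q\in\Delta_1^m$ with $\norm{q-\widehat q}<\delta$, and let $\QQ_\Phi$ now stand for the candidate set of the game run with prior $\widehat q$. First, $\widehat q\in\QQ_\Phi$: each observed flow is a Wardrop equilibrium for the posterior it induces, and substituting $\widetilde q^{\zeta^u}_s = \phi^u_s\widehat q_s/\sum_\ell\phi^u_\ell\widehat q_\ell$ into the Wardrop (in)equalities and clearing the common positive denominator gives precisely~\eqref{eq:posterior-WE-constraint} with $q$ replaced by $\widehat q$ --- this is exactly how those constraints were derived. Second, $\QQ_\Phi$ is contained in the solution set of the subsystem consisting of $\mathbb{1}^\top\varphi=1$ and the $m-1$ retained equalities, namely the single point $B(\widehat q)^{-1}(1,0,\dots,0)^\top$; since $\widehat q$ satisfies all of these constraints, that point must be $\widehat q$. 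Hence $\QQ_\Phi\subseteq\{\widehat q\}$, and together with $\widehat q\in\QQ_\Phi$ we conclude $\QQ_\Phi=\{\widehat q\}$, i.e.\ $\Phi$ is $\widehat q$-identifying.
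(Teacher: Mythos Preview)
Your proposal is correct and follows essentially the same line as the paper's proof: extract $m-1$ independent equality constraints from $\QQ_\Phi^= = \{q\}$, assemble them with $\mathbb{1}^\top\varphi=1$ into an invertible $m\times m$ matrix, and then use Lemma~\ref{lem:continuity-edge-flow} together with Lemma~\ref{lem:used-set-properties} to show both that the selected used paths $p_i,r_i$ remain used and that the matrix stays invertible under small perturbations of the prior. The paper phrases the rank stability directly rather than via $\det$, and invokes Corollary~\ref{cor:Q-independent-of-specific-flow} explicitly to handle the choice of observed WE, but the structure and the key lemmas used are the same; your extra care about the Bayes denominator and about $\widehat q$ remaining in the interior of $\Delta_1^m$ is a welcome addition.
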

\begin{proof}
	First, we note that as a consequence of Corollary~\ref{cor:Q-independent-of-specific-flow}, the set $\QQ_{\Phi}$ is independent of which WE $\widetilde{f}^{\zeta^u} \in \WW^{\zeta^u}$ are observed. Thus, to show that $\Phi$ is $\xi$-identifying for ${\xi \in \Delta_1^m}$, it is enough to show that there exists a set $\{\widetilde{f}^{\zeta^u}\}_{u \in [z]}$ of $\widetilde{\xi}^{\zeta^u}$-WE such that the obtained constraints identify $\xi$. Now consider $q \in \Delta_1^m$ and the signalling scheme $\Phi$ which by assumption is $q$-identifying. Since $\QQ_{\Phi}^= = \{q\}$, it follows from~\eqref{eq:Q-equality} that there exist $m-1$ triplets $\{(p_i,r_i,u_i)\}_{i \in [m-1]}$ with $p_i, r_i \in \PP$ and $u_i \in [m]$ such that $\widetilde{f}^{\zeta^{u_i}}_{p_i}, \widetilde{f}^{\zeta^{u_i}}_{r_i} > 0$ and the system of equations 
	\begin{equation}\label{eq:Q-matrix-equality}
		Q(q) \varphi \! := \! 
		\left( 
			\begin{array}{c} \! \! \! \!
				\mathbb{1}^\top	
				\\
				\big(\alpha(q,p_1,r_1,u_1)\big)^\top
				\\
				\vdots
				\\
				\big(\alpha(q,p_{m-1},r_{m-1},u_{m-1})\big)^\top \! \! \! \!
			\end{array} 
		\right) \varphi \! = \! 
		\left( 
			\begin{array}{c}
				1	\\
				0	\\
				\vdots	\\
				0
			\end{array}
		\right)
	\end{equation}
	has one solution $\varphi = q$. Here $\alpha(q,p_i,r_i,u_i) \in \real^{m}$ is given by
	\begin{equation*}
		\alpha(q,p_i,r_i,u_i) := 
		\left(
			\begin{array}{c}
				\phi^u_1 \big( C_{p_i}^{\theta_{1}}(\widetilde{f}^{\zeta^{u_i}} ) - C_{r_i}^{\theta_{1}}(\widetilde{f}^{\zeta^{u_i}} ) \big)
				\\
				\phi^u_2 \big( C_{p_i}^{\theta_{2}}(\widetilde{f}^{\zeta^{u_i}} ) - C_{r_i}^{\theta_{2}}(\widetilde{f}^{\zeta^{u_i}} ) \big)
				\\
				\vdots
				\\
				\phi^u_m \big( C_{p_i}^{\theta_{m}}(\widetilde{f}^{\zeta^{u_i}} ) - C_{r_i}^{\theta_{m}}(\widetilde{f}^{\zeta^{u_i}} ) \big)
			\end{array}
		\right),
	\end{equation*}
	where the dependence on $q$ is via the dependence of $\widetilde{f}^{\zeta^{u_i}}$ on the posterior $\widetilde{q}^{\zeta^{u_i}}$. Note that from~\eqref{eq:Q-matrix-equality}, the matrix $Q(q)$ has full rank, that is, $\textnormal{rank}\big(Q(q)\big) = m$. From Lemma~\ref{lem:continuity-edge-flow}, since $\widetilde{f}^{\zeta^{u_i}}_{p_i}, \widetilde{f}^{\zeta^{u_i}}_{r_i} > 0$, there exists a $\delta_f > 0 $ such that $\norm{q - \widehat{q}} < \delta_f$ implies that for posteriors $\widehat{q}^{\zeta^{u_i}}$ (based on the prior $\widehat{q}$), there exist $\widehat{q}^{\zeta^{u_i}}$-WE, denoted $\widehat{f}^{\zeta^{u_i}}$, satisfying $\widehat{f}^{\zeta^{u_i}}_{p_i}, \widehat{f}^{\zeta^{u_i}}_{r_i} > 0$. That is, positive flow on $p_i$ and $r_i$ under a WE formed using signal $\zeta^{u_i}$ under scheme $\Phi$ and prior $q$ implies that same paths will have positive flow for some WE under the same signal $\zeta^{u_i}$ and scheme $\Phi$ but induced by a prior $\widehat{q}$ that is close enough to $q$.   This fact along with~\eqref{eq:edge-flows-defined},~\eqref{eq:path-cost-from-edges}, Lemma~\ref{lem:continuity-edge-flow}, and the continuity of functions $\{C_{e_k}(\cdot)\}$, implies that the entries of the matrix $Q(q)$ change continuously with respect to $q$. That is, there exists a $\delta_Q > 0 $ such that $\norm{q - \widehat{q}} < \delta_Q$ implies $\textnormal{rank}\big(Q(\widehat{q})\big) = m$.\footnote{To see this, take a square, non-singular submatrix of $Q(q)$ and note that the determinant depends continuously on the coefficients of $Q(q)$. It follows that for small enough perturbations, the determinant of the submatrix does not becomes $0$, and thus $Q(q)$ retains full column rank.} Thus, the linear system of equations $Q(\widehat{q})\varphi = (1 \enskip 0 \enskip \cdots \enskip 0)^\top$ has a unique solution which is necessarily $\widehat{q}$. That is, we have $\QQ_{\Phi} = \{\widehat{q}\}$. As we mentioned before, even though we use here that for specific $\widehat{q}^{\zeta^{u_i}}$-WE we obtain $\widehat{f}^{\zeta^{u_i}}_{p_i}, \widehat{f}^{\zeta^{u_i}}_{r_i} > 0$, which supplies us with the required equality constraints, the set $Q_\Phi$ is independent of which specific $\widehat{q}^{\zeta^{u_i}}$-WE is observed. The result follows.
\end{proof}
We note that the signalling schemes produced by Algorithm~\ref{ag:DQSS} are of the type considered in the above result. That is, Algorithm~\ref{ag:DQSS} produces signalling schemes for which the resulting equality constraints are enough to identify $q$. What is more, since for these schemes each signal (except the first) supplies one independent equality constraint relating two elements $q_k$ and $q_{\ell}$ of the prior, each signal can be analyzed separately to find the region of priors for which it is guaranteed to still supply an equality constraint. For example, let $\Phi$ be a signalling scheme for a given instance of the game such that under the signal $\zeta^2$ we have $\widetilde{q}_s = 0$ for all $s \geq 3$ and let the remaining dependency of the WE on the posterior $\widetilde{q}$ be given in Figure \ref{fig:2-road-2-state}, where $\widetilde{q}_1 = p_1$. If $\widetilde{q}_1 \in (0.133,0.8)$ the resulting flow gives us an equality constraint on $q$. Additionally, for any perturbation $\widehat{q}$ of $q$, the signalling scheme $\Phi$ will still provide an equality constraint on $\widehat{q}$ as long as the induced change in posterior does not take it outside of the set $(0.133,0.8)$. Furthermore, once $q$ has been identified, the scheme can be modified so as to ensure $\widetilde{q}_1$ is in the center of the interval $(0.133,0.8)$ thereby increasing the robustness of this signal for identification purposes.
\section{Conclusions}\label{sec:conclusions}
In the context of routing games, we have investigated how a TIS can derive information about the prior believes of a population by observing the equilibrium flows induced by different public signals containing information about the state of the network. We have shown that under mild assumptions there always exist signalling schemes that will allow the TIS to fully learn the prior of the population. We have provided an algorithm for updating a given signalling scheme step by step in order to find a scheme sufficient for identifying the held believes. In addition we have shown that a subset of schemes sufficient for identifying the prior are robust in the sense that they can still identify the prior after it has been perturbed by a small amount. We have also briefly investigated the case where the population is divided among several known priors, and given conditions for when the fraction of the population associated to each prior can be identified. We have used examples to illustrate our results.

In the future, we aim to expand our results to more realistic scenarios. Most importantly we hope to further investigate the case where the population is divided among multiple priors. Other possible directions of research are when only noisy observations of the WE are available, when private signalling schemes are used or when the support of the states is not finite. Another interesting but challenging line of research is to investigate optimality of signalling schemes when balancing the objective of gaining information about the prior and minimizing the social cost.
\bibliographystyle{ieeetr}
\bibliography{bibliography.bib}
\end{document}